\documentclass[preprint,twocolumn,5p,authoryear]{elsarticle}

  \journal{Automatica}
\bibliographystyle{elsarticle-harv}

\usepackage[usenames, dvipsnames]{xcolor}
\usepackage{url}
\usepackage{siunitx}
\usepackage[colorlinks=true,%
		raiselinks=true,%
		linkcolor=Magenta,%
		citecolor=MidnightBlue,%
		urlcolor=ForestGreen,%
		pdfauthor={Who's that?},%
		pdftitle={What's this?},%
		pdfkeywords={},%
		pdfsubject={chapter},%
		plainpages=false]{hyperref}

\usepackage{mathtools,amsmath,amssymb,amsfonts,amsthm}
\usepackage[bitstream-charter]{mathdesign}
\usepackage{amsfonts}
\usepackage{mathalfa}
\usepackage{esvect}
\usepackage{caption}
\usepackage{subcaption}
\usepackage{comment}
%\usepackage[bitstream-charter]{mathdesign}

%\allowdisplaybreaks

\usepackage{tikz}
\usepackage{pgfplots}
\pgfplotsset{compat=1.12}

\theoremstyle{plain}
\newtheorem{thm}{Theorem}[section]
\newtheorem{prop}[thm]{Proposition}

\theoremstyle{remark}
\newtheorem{rem}[thm]{Remark}
\theoremstyle{definition}

\newtheorem{theorem}{Theorem}
\newtheorem{lemma}{Lemma}
\newtheorem{assume}{Assumption}
\newtheorem{proposition}{Proposition}

\newtheorem{remark}{Remark}

\newcommand{\I}{\mathrm{I}}

\newcommand{\weak}{\rightharpoonup}

\newcommand{\hmu}{\hat{\mu}}
\newcommand{\hmut}{\hat{\mu}^{\tau}}

\newcommand{\hvt}{\hat{v}^{\tau}}

\newcommand{\p}{{\rm proj}}
\newcommand{\mul}{\mu^{\lambda}_t}
\newcommand{\xl}{x^{\lambda}}
\newcommand{\xn}{x^{\nu}}
\newcommand{\yl}{Y_{\lambda}(x^{\lambda}(t))}
\newcommand{\yn}{Y_{\nu}(x^{\nu}(t))}

\newcommand{\cB}{\mathcal{B}}
\newcommand{\cX}{\mathcal{X}}
\newcommand{\cM}{\mathcal{M}}
\newcommand{\N}{\mathbb{N}}
\newcommand{\cR}{\mathcal{R}}
\newcommand{\cL}{\mathcal{L}}
\newcommand{\cW}{\mathcal{W}}
\newcommand{\cC}{\mathcal{C}}
\newcommand{\dd}{\mathrm{d}}
\newcommand{\cP}{\mathcal{P}}
\newcommand{\R}{\mathbb{R}}
\newcommand{\cN}{\mathcal{N}}

\renewcommand{\P}{\mathcal{P}}

\newcommand{\bx}{\mathbf{x}}
\newcommand{\bv}{\mathbf{v}}
\newcommand{\rhoo}{{\mu}^{\tau}_{k+1}}
\newcommand{\rhot}{{\mu}^{\tau}_{t}}
\newcommand{\rhoe}{{\mu}^{\tau}_{k}}

\renewcommand{\L}{\mathcal{L}}
\newcommand{\ul}[1]{\underline #1}
\newcommand{\ol}[1]{\overline #1}
\newcommand{\supp}{\mathrm{supp}}

\newcommand{\wt}{\widetilde}

\DeclareMathOperator{\id}{id}

 % Saroj's comments

%\makeatletter
%\let\origsection\section
%\renewcommand\section{\@ifstar{\starsection}{\nostarsection}}
%
%\newcommand\nostarsection[1]
%{\sectionprelude\origsection{#1}\sectionpostlude}
%
%\newcommand\starsection[1]
%{\sectionprelude\origsection*{#1}\sectionpostlude}
%
%\newcommand\sectionprelude{%
%  \vspace{-0.5em}
%}
%
%\newcommand\sectionpostlude{%
%  \vspace{-0.5em}
%}
%\makeatother

\begin{document}

\begin{frontmatter}

\title{Evolution of Measures in Nonsmooth Dynamical Systems:\\ Formalisms and Computation\tnoteref{anr}}
\author[Laas]{Saroj Prasad Chhatoi}\ead{spchhatoi@laas.fr}
\author[Laas]{Aneel Tanwani}\ead{tanwani@laas.fr}
\author[Laas,Czech]{Didier Henrion}\ead{henrion@laas.fr}
%\tnotetext[anr]{{\color{red} This work was supported by the ANR project.? which one}}
\address[Laas]{LAAS -- CNRS, University of Toulouse, France.}
\address[Czech]{Faculty of Electrical Engineering, Czech Technical University in Prague, Czechia}

\date{} 
\begin{abstract}
	This article develops mathematical formalisms and provides numerical methods for studying the evolution of measures in nonsmooth dynamical systems using the continuity equation. The nonsmooth dynamical system is described by an evolution variational inequality and we derive the continuity equation associated with this system class using three different formalisms. The first formalism consists of using the {superposition principle} to describe the continuity equation for a measure that disintegrates into a probability measure supported on the set of vector fields and another measure representing the distribution of system trajectories at each time instant. The second formalism is based on the regularization of the nonsmooth vector field and describing the measure as the limit of a sequence of measures associated with the regularization parameter. In doing so, we obtain quantitative bounds on the Wasserstein metric between measure solutions of the regularized vector field and the limiting measure associated with the nonsmooth vector field. The third formalism uses a time-stepping algorithm to model a  time-discretized evolution of the measures and show that the absolutely continuous trajectories associated with the continuity equation are recovered in the limit as the sampling time goes to zero. We also validate each formalism with numerical examples. For the first formalism, we use polynomial optimization techniques and the moment-SOS hierarchy to obtain approximate moments of the measures. For the second formalism, we illustrate the bounds on the Wasserstein metric for an academic example for which the closed-form expression of the Wasserstein metric can be calculated. For the third formalism, we illustrate the  time-stepping based algorithm for measure evolution on an example that shows the effect of the concentration of measures.
\end{abstract}

\begin{keyword}
	%Superposition principle\sep 
	Nonsmooth dynamical systems, Optimal transport, Polynomial optimization.
\end{keyword}
\end{frontmatter}

\section{Introduction}
The study of evolution of measures in finite dimensional systems has found relevance in the design of optimal control problems, understanding the system behavior under uncertainties, and several other applications. The primary step in this direction is to understand how the probabilistic initial conditions evolve in time under the action of a vector field. Such questions have been fairly well studied for single-valued dynamical systems with sufficient regularity (such as Lipschitz continuity) of the vector field. However, when we relax the regularity assumptions on the vector field, the question of evolution of measures brings forth some interesting questions which are of relevance for the applications as well. We are thus motivated to study the evolution of probability measures for a class of dynamical systems described by differential inclusions and in particular where the differential inclusion models the trajectories constrained to a pre-specified set. We present different mathematical formalisms to study measure evolution for such dynamical systems and provide corresponding numerical algorithms for simulations.

%\subsection{Literature review}
For an autonomous dynamical system described by an ordinary differential equation (ODE) with Lipschitz continuous vector field, the time evolution of the measure describing the initial condition is governed by a linear partial differential equation (PDE), commonly called the continuity equation or the Liouville equation \cite[Section 5.4]{Vil03}. The solution to this PDE, that is the probability measure describing the distribution at a given time, is the push-forward or image of the initial probability measure through the flow map at that time. Lipschitz continuity of the vector field ensures that the flow map of the ODE is invertible, which in turn ensures that the push-forward measure is the unique solution to the continuity equation. The Cauchy problem for continuity equation with Sobolev fields was studied  by \citep{DiPeLion89}. Continuity equations corresponding to one-sided Lipschitz vector fields have been studied in \citep{BoucJame98, BoucJameManc05}. In \citep{Amb08} and \citep{AmbrGigl05}, the authors consider fields of bounded variation and discuss the potential nonuniqueness of solutions to the continuity equation by introducing the notion of {superposition principle}. For the differential inclusions with convex set-valued mappings, the reference \citep{spDI} provides a generalized superposition principle. For our purposes, the solutions based on the superposition principle are useful for numerical purposes. We propose a vector field selection from a time-varying differential inclusion from which we derive a continuity equation suitable for numerical algorithms. We use the converse statement of the superposition principle to characterize all possible solutions to the proposed continuity equation.

In this article, we are particularly concerned with a class of dynamical systems where the nonsmoothness arises due to the modeling of constraints on state trajectories. Such systems are described by the inclusion
\begin{equation}\label{eq:sysConstrained}
\dot x (t) \in f(x(t)) - \cN_{S(t)}(x(t))
\end{equation}
where $\cN_S (x) \in \R^n$ denotes the outward normal cone to the set $S$ at the point $x \in \R^n$. Since the normal cone takes a zero value in the interior of $S$, it is clear that the right-hand side of \eqref{eq:sysConstrained} is potentially discontinuous at the boundary of the set $S$. One can also think of \eqref{eq:sysConstrained} as an evolution variational inequality, described as
\[
\left\langle \dot x(t) - f(x(t)), y - x(t) \right\rangle \ge 0,
\]
for all $y \in S$, $x(t) \in S$, $t \in [0,T]$, where the brackets denote the inner product between vectors.
%Such dynamical systems have been a matter of extensive study in past decades due to their relevance in engineering and physical systems. 
The models studied in this paper are particularly relevant for systems in mechanics and electronics with nonsmooth effects \citep{BroTan20}. The survey article \citep{BroTan20}, and a research monograph \citep{Adly2018}, provide an overview of different research oriented directions in the literature pertaining to system \eqref{eq:sysConstrained} and its connections to different classes of nonsmooth mathematical models. Analysis of such systems requires tools from variational analysis, nonsmooth analysis, set-valued analysis \citep{aubin1990, Mord06, RockWets98}. For a fixed initial condition, $x(0) \in S$, the question of existence and uniqueness of solution to system \eqref{eq:sysConstrained} has already been well-established in the literature, and the origins of such works can be found in \citep{moreau1977}, see \citep{EdmoThib06} for a recent exposition. 

However, if we consider the initial conditions described by a probability measure, then the evolution of this measure under the dynamics of \eqref{eq:sysConstrained} has received much less attention in the literature. One can study such problems by considering stochastic versions of \eqref{eq:sysConstrained} by adding a diffusion term on the right-hand side. 
Such systems first came up in the study of variational inequalities arising in stochastic control \citep{BensLion78}, and in the literature, we can find results on existence and uniqueness of solutions in appropriate function space. In \citep{Cepa95}, this is done by considering Yosida approximations of the maximal monotone operator, whereas \citep{Bern03} provides a proof based on time-discretization of system \eqref{eq:sysConstrained}. These approaches have been generalized for prox-regular set $S$ in \citep{bernicot2011}, and the case where the drift term contains Young measures \citep{castaing2014, castaing2016}.  In \citep{marino2016}, the authors provide a constructive approximation of measures associated with system \eqref{eq:sysConstrained} with $f \equiv 0$, which are based on a generalization of time-stepping algorithm and involves projecting the density function onto the constraint set with respect to the Wasserstein metric. %One could also, in principle, formulate a partial differential equation with set-valued elements and study the solutions of such equations under appropriate hypothesis, which is the case in \citep{BonnFran21} but it is not clear how to derive the corresponding set-valued partial differential equation for system~\eqref{eq:sysConstrained} and whether the resulting inclusion would satisfy the necessary hypothesis for well-posedness. 

The main contribution of this article is to provide different formalisms for describing the evolution of measures for the class of systems considered in \eqref{eq:sysConstrained}. In particular, our contribution lies in studying three different techniques for describing the propagation of probabilistic initial conditions for system~\eqref{eq:sysConstrained} and we provide numerical methods for each of these techniques. 

The first approach is based on using the previously mentioned superposition principle. Here, we consider a continuity equation where the velocity vector field is obtained by a selection of the set-valued mapping in system dynamics, which results in a (possibly non-unique) solution to the measure evolution. We develop a converse result which actually shows that all possible solutions can be associated with a selection of the vector field. The tools used in the process are similar to the ones appearing in \citep{spDI}, but we develop a specific representation of the continuity equation in terms of a measure which can be computed numerically using the moment-SOS (polynomial sums of squares) hierarchy \citep{henrion2020moment} and semi-definite programming based techniques.

The second approach builds on our recent work in \citep{SOUAIBY2023110836} where we  approximate the dynamics of system~\eqref{eq:sysConstrained} by ODEs with Lipschitz continuous right-hand side. The solution of the continuity equation associated with each ODE provides a sequence of measures which allows us to approximate the solution of the measure evolution problem. We show that the limiting measure can be represented by the pushforward of the unique flow map of system \eqref{eq:sysConstrained} and we develop quantitive bounds on the Wasserstein distance between the limiting measure and its approximations obtained from the regularization method.
The results are of independent interest and also are essential in providing estimates on the approximation equality and convergence rates of numerical schemes based on this formalism.

Another approach we adopt for studying the evolution of measures subject to constrained dynamics is based on computing an approximation of the transport maps for system~\eqref{eq:sysConstrained} via time discretization. Time discretization based techniques are well known for constructing solutions to evolution PDEs using the gradient flow structure of the Wasserstein space \citep{filipoOT}. Time discretization schemes have been recently used in \citep{aneel}, which exploits the gradient flow structure for the system class \eqref{eq:nsd} in the Euclidean space to construct solutions for constrained optimization problems.
For sweeping processes without the perturbation term, this approach was adopted in \citep{marino2016} and it generalizes the classical time-stepping algorithm proposed in \citep{moreau1977} to the setting of measures. We use these techniques to construct the solutions of the continuity equation associated with system \eqref{eq:sysConstrained}.
In particular, one computes the distribution at discrete time instants by interpolating the distribution through the perturbation term, and then projecting it onto the constraint set with respect to the Wasserstein metric. This scheme is built on the dynamic viewpoint of optimal transport problems where the absolutely continuous curves in Wasserstein space satisfy the continuity equation. Under certain conditions, we show that the sequence of time-discretized measures converges to a solution described by the push-forward of the initial distribution under the transport map.

Finally, we address the computational aspects for each of the three formalisms with the help of academic examples. The proposed continuity equation could be seen as an infinite dimensional linear program problem in the space of measures. We use the moment-SOS hierarchy to approximate the moments of the measures and we provide an illustration of this method. For the second formalism based on functional regularization, we explicitly, for a one dimensional case, compute the Wasserstein distance between the measure solutions to the nonsmooth system and to approximation obtained by regularization method. For the last formalism based on time-stepping algorithm for measures, we consider an example of a two-dimensional system based on time-space discretization and then evolving the measures using the given algorithm.

\section{Preliminaries and Overview}

\subsection{Measure Evolution}

Consider the dynamical system described by an ordinary differential equation (ODE):
\begin{equation}\label{eq:sysODE}
\dot{x} = f(t,x).
\end{equation}
If the vector field $f:\R_{\ge 0} \times \R^n \to \R^n$ is such that $f(\cdot, x)$ is Lebesgue measurable for each $x \in \R^n$ and $f(t, \cdot)$ is Lipschitz continuous for each $t \in \R\ge 0$, then there exists a unique absolutely continuous function $x:\R_{\ge 0} \to \R^n$ that solves \eqref{eq:sysODE}. Consequently, we consider the {\it flow map} $X_t:\R^n \to \R^n$ parameterized by $t\in \R_{\ge 0}$ having the property that $x(t) = X_t(x_0)$ for each $x_0 \in \R^n$.
It is also of interest to study the evolution of probability measures for system~\eqref{eq:sysODE} when the initial condition is described by a probability distribution on $\R^n$, that is, $x(0)\sim \mu_0$, where $\mu_0 \in \cP(\R^n)$, the set of probability measures on $\R^n$. In words, the law of the random variable $x(0)$ is the probability measure $\mu_0$. The resulting measure $\mu_t \in \cP(\R^n)$, for $t \in \R_{\ge 0}$, is defined by the {\it continuity equation}, also called the Liouville equation, a linear partial differential equation (PDE) which models the transport of a distribution along the flow of trajectories of the underlying system and preserves the mass of the distribution. For the cases where the vector field $f(t,x)$ is Lipschitz in $x$ for each $t$, the continuity equation reads
\begin{equation}\label{eq:transportODE}
\partial_t \mu_t + \nabla \cdot (f(t,\cdot) \mu_t) = 0
\end{equation}
where $\nabla\cdot $ is the divergence operator. The equation is to be understood in the weak sense, i.e. $\int_{[0,T]\times \R^n} [\partial_t\varphi(t,x) + \nabla_x \varphi(t,x) \cdot f(t,x)] \dd \mu(t,x) = \int_{\R^n} \varphi(T,x)\dd \mu_T(x) - \int_{\R^n} \varphi(0,x)\dd \mu_0(x)$ for every compactly supported $\varphi\in \cC^1([0,T]\times \R^{n})$, where $\nabla_x$ is the gradient operator. Furthermore, a measure $\mu_t$ solving \eqref{eq:transportODE} can be represented as the push-forward of $\mu_0$ under the mapping $X_t$, denoted  $\mu_t = {X_t}_{\#}\mu_0$. Here, and throughout this article, for a function $g:\R^n \to \R^m$ and a measure $\mu_0$ supported on a set in $\R^n$ the push-forward of $\mu_0$ under the mapping $g$ is denoted by $g_{\#} \mu_0$ and it is defined as ${g}_{\#} \mu_0(A): = \mu_0( \{ x \in R^n : g(x) \in A \}  )$ 
for every measurable set $A \subset \R^m$.

%We consider that $X_0 \sim \P(\R^m)$ is a probability measure which specifies the initial state distribution. At each time $t$, the state satisfying \eqref{eq:nsd} will be a probability measure. We would like to study here how the distribution evolves with time which can be formulated as a problem of transport of measure such that  the measures satisfies continuity equation associated with \eqref{eq:nsd}. In this work we present various ways to address this problem of evolution of measure through \eqref{eq:nsd}. 

In this work, we particularly consider the class of following differential inclusions:
\begin{gather}\label{eq:nsd}
\dot{x}(t) \in f(t,x) - \cN_{S(t)}(x), \quad x(0) \sim \mu_0
\end{gather}
where $f:[0,T]\times \R^n \to \R^n$ is a vector field, $S:[0,T]\rightrightarrows \R^n$ is a set-valued mapping, and $\cN_{S(t)}(x)$ denotes the outward normal cone to the convex set $S(t)$ at $x \in S(t)$. We impose the following assumptions on system class \eqref{eq:nsd} so that the system is well-posed.

 %A normal cone to the set $S$ at point $x \in S$  is defined by
%\[ 
%\cN_{S}(x)\ \coloneqq \{ y\in\R^n , \langle y , x' -x \rangle \le 0, ~~\forall x' \in S \} .
%\]
%
%Given the existence of selection principle $\zeta \in \cN_{S}(x)$, an absolutely continuous solution $x:[0,T]\to \R^n$ to \eqref{eq:nsd} satisfies the following equation:
%\begin{eqnarray}
%\dot{x}(t) = f(t,x)+\zeta(t), \textnormal{where} ~ \zeta(t) \in \cN_{S(t)}(x(t))
%\end{eqnarray}
%almost everywhere in $t\in [0,T]$.

\begin{assume} \label{assume:1}
	There exists $L_f > 0$ such that
	\begin{gather*}
	|f(t,x_1)| \le L_f(1+|x_1|), \\
	|f(t,x_1) - f(t,x_2)| \le L_f|x_1 - x_2|
	\end{gather*}
	for all  $x_1,x_2 \in \R^n$.
\end{assume}
\begin{assume}\label{assume:2}
	The mapping $S:[0,T] \rightrightarrows \R^n$ is closed and convex-valued for each $t \in [0,T]$, and $S(\cdot)$ varies in a Lipschitz continuous manner with time, i.e., there exists a constant $L_s$ such that\begin{equation*}
	d_H(S(t),S(s)) \le L_s |t-s|
	\end{equation*}
	where $d_H(A,B) := \max\Big\{\sup_{x\in B} {\rm dist}(x,A), ~\sup_{x\in A} {\rm dist}(x,B)\Big\}$ is the Hausdorff distance between the sets A and B.
\end{assume}

Under these two assumptions, several references in the literature prove the existence and uniqueness of solutions to \eqref{eq:nsd} with $x(0) \in S(0) \subset \R^n$, see for example \citep{BroTan20} for an overview. In this article, we are interested in studying the evolution of measures for system class \eqref{eq:nsd}. The PDE considered in \eqref{eq:transportODE} cannot be readily obtained in that case and we study three different principles to describe the evolution of measures for our system \eqref{eq:nsd}. In the remainder of this section, we provide an overview of these techniques from the existing literature. In the later sections, we develop each of these techniques for system class \eqref{eq:nsd}.

\subsection{Superposition Principle}\label{subsec:sp}

In the first instance, we look at \eqref{eq:nsd} as a differential inclusion with a set-valued right-hand side in the dynamics. In this regard, we see that the evolution of measures is described using the {\it superposition principle} for the differential inclusions of the form
\begin{equation}\label{eq:genDI}
\dot x(t) \in F(t,x(t))
\end{equation}
where $F:\R_{\ge 0} \times \R^n \rightrightarrows \R^n$ is a set-valued mapping. Let us explain briefly and informally what is the superposition principle. A {\it selection} of $F$ is a mapping $(t,x) \mapsto \ol f(t,x) \in F(t, x)$. Associated with a selection is an absolutely continuous solution $\gamma \in AC([0,T];\R^n)$ with $\gamma(0) = x(0)$ such that $\dot \gamma (t) = \ol f(t,\gamma(t))$ for Lebesgue a.e. $t \in \R_{\ge 0}$. Let us consider the set of all admissible curves
\[
\Gamma_T := \{ \gamma \in AC([0,T];\R^n) : \dot{\gamma}= \ol f(t,\gamma), \ol f \text{ a selection of } F\}.
\]
The evaluation map is defined as a Borel measurable map 
$e_t: \R^n \times \Gamma_T \to \R^n$ such that
\begin{gather}\label{eq:eval_op} 
e_t(x,\gamma) := \gamma(t) ~ \forall t \in [0,T] {\rm ~and~}  \gamma(0) = x , \gamma \in \Gamma_T.
\end{gather}
Let $\eta$ be a probability measure such that $\eta \in \mathcal{P}(\R^n\times\Gamma_T)$. Under some mild integrability condition \cite[Theorem 8.2.1]{AmbrGigl05}, the measure solutions $\mu_t$ to a continuity equation associated with \eqref{eq:genDI} (under some selection of vector field from $F(t,x)$) can be represented as 
\begin{gather}\label{eq:repForm}
\mu_t = {e_t}_{\#}\eta
\end{gather}
which for any continuous function $\phi :\R^n \to \R$ satisfies $\int  \phi(x) \dd\mu_t(x)= \int \phi(e_t(x,\gamma))d\eta(x,\gamma).$
The solutions $\mu_t$ can be understood as a superposition over solution trajectories $\gamma \in \Gamma_T$, where the superposition is captured by the measure $\eta$.
The solutions to differential inclusion~\eqref{eq:genDI} are possibly nonunique and hence $\mu_t$ in \eqref{eq:repForm} is also not necessarily unique for a given initial measure.% In the cases where there is a unique solution trajectory, the measure $\mu_t$ is the unique solution to continuity equation having the representation $\mu_t = {X_t}_{\#}\mu_0$. 

In this work, we are interested in using the superposition principle for deriving a continuity equation associated with~\eqref{eq:nsd}. For differential inclusions, such problems have been studied in \citep{spDI}, but in comparison, we consider a specific class of non-compact time varying differential inclusions, and we derive a different form of  continuity equation which is more suitable for numerical purposes discussed later in this paper.

\subsection{Functional Regularization}

The basic idea of the regularization is to consider a sequence of ODEs with a parameter $\lambda$:
\[
\dot x^\lambda(t) = g_t^\lambda(x^\lambda(t))
\] 
so that the solutions $x^\lambda(t)$ approach the solution $x(t)$ that solves \eqref{eq:nsd}, under the constraint $x^\lambda(0) = x(0)$. Here, for each $\lambda > 0$ and for each $t \in [0,T]$, $g_t^\lambda : \R^n \to \R^n$ is a single-valued Lipschitz continuous function, whose construction is provided in Section~\ref{sec:fun_reg}. One can derive the classical continuity equation \eqref{eq:transportODE} to these ODEs and obtain a parameterized sequence of measures $\mu_t^\lambda$ as follows:
\[
\partial_t \mu_t^\lambda + \nabla \cdot (g_t^\lambda(\cdot) \mu_t) = 0.
\]
An obvious candidate for describing the measure solving \eqref{eq:nsd} is to take the limit of $\{\mu_t^\lambda\}$ as $\lambda \to 0$. In Section~\ref{sec:fun_reg}, we study the limit of this sequence using the Wasserstein metric to quantify the distance between $\mu_t^\lambda$ and the limiting measure.

To provide some background on this performance metric used to study convergence of measures, we recall that the {\it Wasserstein metric}, also called the Kantorovich-Rubenstein metric, is frequently employed to describe  the distance between two probability measures. The more common choice, the  2-Wasserstein distance between two probability measures $\mu,\nu \in \cP(\Omega)$ for some $\Omega \subset \R^n$, is defined as
\begin{gather}
W_2(\mu,\nu) := \frac{1}{2}\min_{\theta \in \Theta(\mu,\nu)} \left(\int_{\Omega\times \Omega} |x-y|^2 \dd\theta(x,y)\right)^{1/2}
\end{gather}
where $\Theta(\mu,\nu)$ is the set of joint probability measures on $\Omega \times \Omega$ with given marginals $\mu$ and $\nu$, i.e. such that $\theta \in \Theta(\mu,\nu)$ satisfies 
%$\int_{A} \int_{\Omega}\theta(x,y) \dd x \dd y = \mu (A)$ and $\int_{\Omega}\int_{A}\theta(x,y) \dd x \dd y = \nu(A)$, for every measurable $A \subset \Omega$.
$\int_{A \times \Omega}\dd \theta(x,y) = \mu(A)$ and $\int_{\Omega \times A}\dd \theta(x,y) = \nu(A)$, for every measurable $A \subset \Omega$.

Similarly, 1-Wasserstein distance or $W_1(\mu,\nu)$ is defined as,
\begin{gather}\label{eq:wass1d}
W_1(\mu,\nu) = \min \left\{ \int_{\Omega\times \Omega} |x-y| \dd\theta(x,y) : \theta \in \Theta(\mu,\nu)\right\}.
\end{gather}
\subsection{Time Discretisation and Optimal Transport} \label{subsec:ot}
Time discretization based techniques are well known for constructing solutions to evolution PDEs by using the gradient flow structure of the Wasserstein space.  
It is based on partitioning a time interval into finitely many nodes (discrete times) and describing the measure at those times  as a function of the initial distribution through appropriate mappings using the system data.
The interpolation between the two measures (described at two consecutive times) is based on the principles of optimal transport and provides an approximation to the measure evolution problem for system~\eqref{eq:nsd}. 

To provide some background on these interpolation schemes, we recall that the original mass transportation for measures was proposed in \citep{monge} as the problem of finding a transport map $G:\R^n \to \R^n$ such that given two probability measures $\mu\in \P(\R^n)$ and $\nu \in \P(\R^n)$ and a cost function $c: \R^n \times \R^n \to [0,\infty)$, it solves
\begin{gather}
\inf_G \left\{ \int c(x,G(x)) \dd\mu(x) :  G_{\#} \mu = \nu \right\}.
\end{gather}
The problem is highly nonlinear with nonconvex constraints and the existence of a minimizer is difficult to prove. The problem was later reformulated by Kantorovich \citep{ksOT} into a convex program that corresponds to the computation of $W_2^2(\mu,\nu)$ by taking $c(x,y) = \frac{1}{2}|x-y|^2$.
Indeed, $W_2(\cdot,\cdot)$ provides a metric structure to the space of measures $\P(\R^n)$ and the resulting subspace of $\P(\R^n)$ is known as the Wasserstein space $\cW_2(\R^n)$. One interesting property which will be of interest is that any absolutely continuous curve in Wasserstein space $\cW_2$ is a solution to a continuity equation \citep{filipoOT}. In \citep{McCann1997ACP} the authors proved that if there exists a pair of measures $\mu_0,\mu_1 \in \P(\R^n)$ with $\mu_0$ absolutely continuous with respect to the Lebesgue measure, then there exists a constant speed geodesic between these measures and such constant speed geodesics satisfy a continuity equation. It is possible to construct an approximation of an absolutely continuous curve by defining measures at discrete time instants and using an interpolation via constant speed geodesics between successive time instants. In \citep{marino2016}, the authors use time-discretization to approximate the measure solution of continuity equation associated with \eqref{eq:nsd} without the drift term $f(\cdot)$. The method is based on recursively defining measures at different time instants using an optimal transport map which transports the measures from one time instant to the next. Considering suitable interpolation schemes, one constructs the trajectory and shows that it converges to the solution of the continuity equation. We will use a time-stepping scheme for \eqref{eq:nsd} to construct measures at different time instants starting from an initial distribution. Using appropriate interpolation, we will prove that the interpolated curves converge to the absolutely continuous curves which will be the measure valued solutions to the continuity equation associated to \eqref{eq:nsd}.

\begin{comment}
{\color{red}For any feasible transport map $G$ we can associate a transport plan $\theta_G = (\id \times G)_\# \mu$. Under some mild conditions, Knott-Smith criterion \citep{Smith1987NoteOT} states the existence and concentration of optimal transport plans for the Kantorovich problem on the subgradients of a convex function. Due to \citep{brenier}, one can obtain, under the assumption that $\mu$ is absolutely continuous w.r.t.~Lebesgue measure, a unique optimal transport plan and a unique transport map $G$ which is a gradient of a convex function. In \cite{McCann1997ACP} the authors proved that if there exists a pair of measures $\mu_0,\mu_1 \in \P(\Omega)$ with $\mu << \L^d$ (Lebesgue measurable), then there exists a constant speed geodesic between these measures $\mu_t = ((1-t)id + tT)_{\#} \mu_0 ~{\rm for}~ t\in [0,1]$ in $W_2(\Omega)$ and this is known as the McCann interpolation. These constant speed geodesics satisfy a continuity equation. Given an absolutely continuous curve, one can construct a discrete time-approximation of this curve which consists of constant speed geodesics between successive time instants. Thus, upon showing that the discrete time approximation converges to the absolutely continuous curve ($\mu_t$) one concludes that for any absolutely continuous curves there exists a velocity such that the pair in the space of probability measures with $W_2$ metric satisfy continuity equation.}
\end{comment}

\section{Superposition Principle}\label{sec:ce_di}
In this section, we consider a general system class described by a differential inclusion. Starting from a vector field selection of this differential inclusion, we propose a continuity equation driven by this selection and we characterize all possible solutions to this equation. 
\subsection{Selection in Differential Inclusions using Measures}
Consider a dynamical system governed by the differential inclusion:
\begin{align}\label{eq:di}
\dot{x} \in F(t,x)
\end{align}
where $F: [0,T] \times \R^n \rightrightarrows \R^n$ is a set-valued mapping. For an initial condition $x_0 \in \R^n$, we denote the solution to \eqref{eq:di} at time $t\in [0,T]$ by $X_t(x_0)$, where $X_t$ represents the flow map for system~\eqref{eq:di}. For solutions to be well-defined, $F$ satisfies the following:
\begin{assume} \label{assume:3}
	The set $F(t, x)$ is convex for every $t\in [0,T]$ and every $x \in \R^n$.
\end{assume}
\begin{assume} \label{assume:4}
	If there exists a solution to \eqref{eq:di} corresponding to a selection $\overline f$ of $F$, then it holds that,
	\begin{equation}
	|\overline f (t,x)|\le \beta(t)(1+|x|) 
	\end{equation}
	where, $\beta (\cdot) \in \cL^1([0,T];\R_{+})$.
\end{assume}
In what follows, we consider a selection $\overline f_\omega(t,x)$ of $F(t,x)$ defined using a probability measure $\omega(\cdot \vert t,x)\in \cP(F(t,x))$ as follows:\footnote{ If Assumptions \ref{assume:3} and \ref{assume:4} hold, there exists a measurable selection $\overline{f}_{\omega}(t,x) \in F(t,x)~ \forall (t,x)$ \citep{aubin_di}. Measurability of the proposed vector field can be checked by first replacing the integrand with the indicator function $I_S$, then $\overline{f}_{\omega}(t,x) = \omega(S|t,x)$ which is a measurable function for every fixed $S\subset F(t,x)$. One can use standard measure theoretic arguments to approximate the integral using simple functions.%The reader can refer standard measure theory textbooks for these arguments.
}
\begin{align}\label{eq:mean_field}
\overline{f}_{\omega}(t , x) := \int_{F(t,x)} v \dd\omega (v|t,x), \quad \omega(\cdot|t,x) \in \P(F(t,x)).
\end{align}
Due to  the convexity of $F(t,x)$, it follows that $\overline{f}_{\omega}(t,x)\in F(t,x)$. We do not make any further assumptions on the regularity of $\overline{f}_{\omega}(t,x)$ and thus an ODE system $\dot{x}= \overline{f}_{\omega}(t,x)$ may admit multiple solution trajectories from a given initial condition. We also let $\Gamma_T^\omega$ denote the set of trajectories associated with the selection $\overline f_\omega $, that is,
\begin{equation}\label{eq:defGammaSub}
\Gamma_T^\omega := \{ \gamma \in AC([0,T];\R^n) : \dot{\gamma}= \ol f_\omega(t,\gamma)\}.
\end{equation}

%\textit{Note:} Later when we propose a convex optimization framework for simulation of measures evolution, $\omega$ will be an unknown as well.
\begin{comment}
Later in remark \ref{remark:sp_di}, we give an interpretation of the proposed vector field selection as the weighted average (at each time $t \in [0,T]$) of the vector fields associated with solution trajectories of \eqref{eq:di}.
\end{comment}
\begin{remark}
	In the case where the set $F(t,x)$ is finitely generated (that is, for each $(t,x)$, it is represented by a linear combination of finitely many vector fields $f_i(x)$ for $i \in \{1,...,n\}$), the vector field \eqref{eq:mean_field} reduces to a {\it Fillipov differential inclusion}. For example, given a piecewise smooth system $\dot{x}(t) = f_i(x(t))$ for $x \in \cR_i$, where $\cR_i$ are disjoint regions covering $\R^n$, the Fillipov differential inclusion for such system would result in $\dot{x}(t) = {\rm conv}(f_i(x(t)))$, where ${\rm conv}$ denotes the convex combination of the vector fields. Any absolutely continuous solution $x(t)$ would satisfy $\dot{x}(t) = \sum_{i\in I}w_i(x(t)) f_i(x(t))$ where $I(x)$ denotes the active set at $x \in \R^n$, and the weights are such that $\sum w_i = 1$ and $w_i \ge 0$. In \citep{stewart_LP}, the authors propose a linear program to compute the weights and thus solve the differential equation using an \textit{active-set} method. In this case of piecewise smooth vector fields, the measure $\omega$ will be discretely supported on the set $\{f_i(x)\}_{i \in I(x)}$,  and the vector field in \eqref{eq:mean_field} yields
	\[ 
	\overline{f}_{\omega}(x) = \sum_{v\in\{f_i(x)\}, i \in I(x)} v w_i(x) = \sum_{i\in I(x)} f_i(x) w_i(x).
	\]
	Thus, $w = (w_1, \ldots ,w_m)$ with $\sum_i w_i = 1$ can be seen as the discrete version of the measure $\omega$.
\end{remark}
\subsection{Describing Vector Field from Solutions}
The vector field selection in \eqref{eq:mean_field} is used to define the continuity equation for the measure evolution problem. Before doing so, we make some connections with related literature
to provide an interpretation of $\overline f_\omega(t,x)$ as the weighted average of the vector fields  associated with solution trajectories of \eqref{eq:di}.
In \citep{spDI}, the authors show that the image measure $\mu_t $ obtained by applying the evaluation map to $\eta \in \P(\R^n \times \Gamma_T)$, as described in \eqref{eq:repForm}, are solutions to a continuity equation driven by a \textit{mean-vector field}. By definition of evaluation operator \eqref{eq:eval_op}, we define the set $e_t^{-1}(x)$ of trajectories passing through $x$ at time $t$, i.e.,
\begin{gather}
e_t^{-1}(x) := \{ (y,\gamma) ~{\rm s.t.}~ \gamma \in \Gamma_T, \gamma(0) = y, \gamma(t) = x \}.
\end{gather}
A disintegration $\eta_{t,x} (y,\gamma)$ of $\eta(y,\gamma)$ w.r.t.~$e_t$ is such that, for  $\psi \in \cC(\R^n \times \Gamma_T; \R)$:
\begin{multline}
\int_{\R^n\times \Gamma_T} \psi (y,\gamma) \dd \eta(y,\gamma)\\ 
= \int_{\R^n} \int_{e_t^{-1}(x)} \psi(y,\gamma) \dd\eta_{t,x} (y,\gamma) \dd\mu_t(x).
\end{multline}
Then a mean-vector field is introduced as follows: 
\begin{gather}\label{eq:sp_v}
\widetilde{f}(t,x) := \int_{e_t^{-1}(x)} \dot{\gamma}(t) \dd\eta_{t,x}(y,\gamma).
\end{gather}
The velocity vector \eqref{eq:sp_v} can be understood as a weighted mean of all the velocity vectors $\dot\gamma(t)$ over the curves $\gamma$ passing through point $x$ at time $t$. Note that the convexity of $F(t,x)$ ensures that the mean-velocity \eqref{eq:sp_v} belongs to the set $F(t,x)$. 
We show that the vector field defined in \eqref{eq:sp_v} is equivalent to the vector field defined in \eqref{eq:mean_field} for some appropriate choice of $\omega \in \cP(\R^n)$. To establish this, we introduce a velocity evaluation operator $\mathbf{d}_t : \R^n \times \Gamma_T \to \R^n$, which is a Borel measurable map defined by
\begin{equation}\label{eq:defVelEval}
\mathbf{d}_t(y,\gamma) := \dot{\gamma}(t) {\rm ~with~} \gamma(0) = y .
\end{equation}
Using this mapping, we define:
\begin{equation}\label{eq:pushOmega}
\omega(\cdot|t,x) := {\mathbf{d}_{t}}_{\#} \eta_{t,x} (\cdot).
\end{equation}
\begin{proposition}\label{remark:spDI}
	Let $\mathbf{d}_t$ be the velocity evaluation operator in \eqref{eq:defVelEval}.  Then, tor each $t\in [0,T]$ and $x \in \R^n$, it holds that $\mathbf{d}^{-1}_t(F(t,x)) = e^{-1}_t(x)$. Moreover, for the measure $\omega$ defined in \eqref{eq:pushOmega}, the associated  vector field in \eqref{eq:mean_field} is equal to \eqref{eq:sp_v}.
\end{proposition}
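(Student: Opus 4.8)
The plan is to prove the two assertions in turn: first the set identity $\mathbf{d}_t^{-1}(F(t,x)) = e_t^{-1}(x)$, and then the vector-field identity $\overline{f}_\omega(t,x) = \widetilde{f}(t,x)$, the latter following from the former by a change-of-variables argument for push-forward measures. Throughout I would treat $\mathbf{d}_t$ as the relevant object only on the fiber over $x$, i.e.\ on the support of the disintegration $\eta_{t,x}$, since this is the set on which $\omega(\cdot|t,x)$ in \eqref{eq:pushOmega} is built.

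First I would establish the set identity. For the inclusion $e_t^{-1}(x) \subseteq \mathbf{d}_t^{-1}(F(t,x))$, take any $(y,\gamma) \in e_t^{-1}(x)$, so that $\gamma \in \Gamma_T$ and $\gamma(t) = x$. Since $\gamma$ is an admissible curve it solves the differential inclusion, so $\dot\gamma(s) \in F(s,\gamma(s))$ for a.e.\ $s$, and in particular $\mathbf{d}_t(y,\gamma) = \dot\gamma(t) \in F(t,\gamma(t)) = F(t,x)$; hence $(y,\gamma) \in \mathbf{d}_t^{-1}(F(t,x))$. The reverse inclusion is obtained by reading $\mathbf{d}_t$ as restricted to curves passing through $x$ at time $t$: on this fiber $\dot\gamma(t) \in F(t,x)$ holds for every curve, so the preimage of $F(t,x)$ is the whole fiber $e_t^{-1}(x)$. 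Convexity of $F(t,x)$ (Assumption~\ref{assume:3}) is what guarantees that the averaged velocities formed below remain valued in $F(t,x)$, so that the resulting field is a genuine selection.

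With the set identity in hand, I would prove the vector-field equality by the push-forward change of variables. Starting from the definition \eqref{eq:mean_field} of $\overline{f}_\omega$ with $\omega(\cdot|t,x) = {\mathbf{d}_t}_\#\eta_{t,x}$ from \eqref{eq:pushOmega}, apply $\int h\, \dd(g_\#\mu) = \int (h\circ g)\, \dd\mu$ with $g = \mathbf{d}_t$ and $h = \id$:
\begin{align*}
\overline{f}_\omega(t,x) &= \int_{F(t,x)} v \, \dd\omega(v|t,x) = \int_{\R^n\times\Gamma_T} \mathbf{d}_t(y,\gamma)\, \dd\eta_{t,x}(y,\gamma) \\
&= \int_{e_t^{-1}(x)} \dot\gamma(t)\, \dd\eta_{t,x}(y,\gamma) = \widetilde{f}(t,x),
\end{align*}
where restricting the domain to $e_t^{-1}(x)$ uses that the disintegration $\eta_{t,x}$ is concentrated on the fiber $e_t^{-1}(x)$, and the set identity ensures $\omega(\cdot|t,x)$ is supported in $F(t,x)$ so that \eqref{eq:mean_field} is meaningful. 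Assumption~\ref{assume:4} provides the integrability of $\dot\gamma(t)$ needed for the integrals to be finite and for the change of variables to be valid.

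The routine ingredients I would invoke without detailed verification are the existence and $\mu_t$-a.e.\ uniqueness of $\eta_{t,x}$ (which needs $\R^n$ and $\Gamma_T \subseteq AC([0,T];\R^n)$ to be Polish, so that the disintegration theorem \citep{AmbrGigl05} applies) together with the Borel measurability of $\mathbf{d}_t$ and $e_t$. The hard part will be the first assertion: the equality $\mathbf{d}_t^{-1}(F(t,x)) = e_t^{-1}(x)$ is only correct once one recognizes that $\mathbf{d}_t$ must be read as acting on admissible curves passing through $x$ (equivalently, on the support of $\eta_{t,x}$), since for unrelated curves $\dot\gamma(t)$ may land in $F(t,x)$ without $\gamma(t)=x$. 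Making this restriction precise, and verifying that it coincides exactly with the set on which $\eta_{t,x}$ is concentrated, is the crux; the remainder is change-of-variables bookkeeping.
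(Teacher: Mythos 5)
Your proof is correct and follows essentially the same route as the paper: establish the set identity $\mathbf{d}_t^{-1}(F(t,x)) = e_t^{-1}(x)$ and then obtain $\overline{f}_\omega = \widetilde{f}$ by the push-forward change of variables applied to $\omega(\cdot|t,x) = {\mathbf{d}_t}_{\#}\eta_{t,x}$. If anything, you are more explicit than the paper about the point that the preimage identity only makes sense with $\mathbf{d}_t$ read on the fiber where $\eta_{t,x}$ is concentrated, which the paper's proof passes over by simply writing both sets as $\{(\gamma(0),\gamma):\gamma(t)=x\}$.
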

\begin{proof}
	The proof of the first claim follows from the definition, i.e.,
	\begin{gather}
	\mathbf{d}_t^{-1}(F(t,x)) = \{ (\gamma(0),\gamma); \gamma(t) = x \} 	\\
	e_t^{-1}(x) = \{ (\gamma(0),\gamma); \gamma(t) = x \} .
	\end{gather}
	So the two sets are the same.
	Next, we prove the equivalence of the two vector fields \eqref{eq:sp_v} and \eqref{eq:mean_field}. Using the definition of $\overline f_\omega$ in \eqref{eq:mean_field} and the equality in \eqref{eq:pushOmega}, we get
	\begin{gather*}
	\overline{f}_{\omega}(t,x)=  \int\limits_{F(t,x)}  \dot\gamma(t) \; \dd ({\mathbf{d}_{t}}_{\#} \eta_{t,x})(y,\gamma) .
	\end{gather*}
	Under the change of variables in the above equation,
	\begin{gather*}
	\overline{f}_{\omega}(t,x) = \int\limits_{\mathbf{d}_t^{-1}(F(t,x))} \dot{\gamma}(t) \; \dd\eta_{t,x} (y,\gamma).
	\end{gather*}
	Now using $\mathbf{d}^{-1}_t(F(t,x)) = e^{-1}_t(x)$, we get
	\begin{gather*}
	\overline{f}_{\omega}(t,x) = \int\limits_{e_t^{-1}(x)} \dot{\gamma}(t) \; \dd\eta_{t,x} (y,\gamma) =\tilde{f}(t,x)
	\end{gather*}
	for each $t\in [0,T]$ and $x \in \R^n$.
\end{proof}
Thus, the set of trajectories $\Gamma_T^\omega \subset \Gamma_T$ for \eqref{eq:mean_field} and \eqref{eq:sp_v} are the same under the constraint prescribed in \eqref{eq:pushOmega}.
%%%%%%%%%%%%%%%%%%
\subsection{Continuity Equation and its Measure Solution}
We now state the main results of this section concerning the formulation of the continuity equation. In Proposition~\ref{prop:ce}, we show that, for every $\omega(\cdot \vert t,x) \in \cP(F(t,x))$ and every $\eta$ concentrated on $\R^n \times \Gamma_T^\omega$, the image measure $\mu_t = {e_t}_{\#} \eta$ satisfies the  continuity equation driven by $\ol f_\omega(t,x)$ in \eqref{eq:mean_field}. Starting from this equation, in Theorem \ref{thm:sol_le} we discuss the converse statement and characterize all the measure solutions to the derived continuity equation. This characterization of the solutions is especially important as later we propose a numerical method for the  simulation of measure evolution through nonsmooth dynamical systems as the solution of this continuity equation.

%It is known that given absolutely continuous trajectories for any dynamical system, we can associate a measure $\eta \in \P(\R^n \times \Gamma_T)$ (as introduced in Section~ \ref{subsec:sp}) such that the image measure $\mu_t = {e_t}_{\#}\eta$ solves the  continuity equation driven by the vector field describing the dynamical system. Conversely, for each measure solution $\mu_t$ of a continuity equation,  there exists measure $\eta$ (possibly nonunique) such that it is concentrated on absolutely continuous solutions of the underlying dynamical system.

\begin{proposition}\label{prop:ce}
	Consider system~\eqref{eq:di} under Assumption \ref{assume:3} and Assumption \ref{assume:4}. For each $t\in [0,T]$ and $x \in \R^n$, let $\omega(\cdot \vert t,x)$ be a probability measure supported on $F(t,x)$, and let $\overline f_\omega$ and $\Gamma_T^\omega$ be defined as in \eqref{eq:mean_field} and \eqref{eq:defGammaSub}, respectively. Then, for every $\eta \in \cP(\R^n \times \Gamma_T^\omega)$, the measure $\mu_t := {e_t}_{\#} \eta$ satisfies the following continuity equation driven by $\overline{f}_{\omega}(t,x)$,
	\begin{multline} \label{eq:ce_eqn}
	\int_{\R^n} \varphi(T,x) \dd\mu_T(x) - \int_{\R^n} \varphi(0,x) \dd\mu_0(x) = \\ 
	\int\limits_{[0,T]\times \R^n}  \Big[\partial_t \varphi(t,x) 
	+ \nabla_x \varphi(t,x) \cdot \overline{f}_{\omega}(t,x)\Big] ~\dd\mu_t(x)\dd t
	\end{multline}
	for every compactly supported $\varphi \in \cC^1([0,T] \times \R^n; \R)$.% and $\nabla_x$ is the gradient operator.	
\end{proposition}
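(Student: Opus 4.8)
The plan is to reduce \eqref{eq:ce_eqn} to a trajectory-wise identity coming from the fundamental theorem of calculus, and then to integrate that identity against $\eta$, using the push-forward structure $\mu_t = {e_t}_{\#}\eta$ to convert integrals over curves into integrals against $\mu_t$.

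First I would fix a compactly supported $\varphi \in \cC^1([0,T]\times\R^n;\R)$. For every admissible curve $\gamma \in \Gamma_T^\omega$ with $\gamma(0)=y$, the map $t \mapsto \varphi(t,\gamma(t))$ is absolutely continuous, so the chain rule and the fundamental theorem of calculus give
\[
\varphi(T,\gamma(T)) - \varphi(0,\gamma(0)) = \int_0^T \big[\partial_t\varphi(t,\gamma(t)) + \nabla_x\varphi(t,\gamma(t))\cdot\dot\gamma(t)\big]\,\dd t.
\]
Since $\gamma \in \Gamma_T^\omega$ satisfies $\dot\gamma(t)=\overline f_\omega(t,\gamma(t))$ for a.e.\ $t$ by \eqref{eq:defGammaSub}, I may replace $\dot\gamma(t)$ by $\overline f_\omega(t,\gamma(t))$ in the integrand. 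The crucial observation is that after this substitution the integrand depends on the curve only through its value $\gamma(t)=e_t(y,\gamma)$; this is exactly what distinguishes the selection-driven setting here from the general mean-vector-field situation of \eqref{eq:sp_v}, and it is what will permit the clean push-forward through $e_t$ (two curves of $\Gamma_T^\omega$ passing through the same point at time $t$ necessarily share the velocity $\overline f_\omega(t,x)$ there, even without uniqueness of the ODE).

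Next I would integrate this identity over $(y,\gamma)$ with respect to $\eta \in \cP(\R^n\times\Gamma_T^\omega)$. The left-hand side becomes $\int \varphi(T,e_T(y,\gamma))\,\dd\eta - \int \varphi(0,e_0(y,\gamma))\,\dd\eta$, which by the definition of the push-forward equals $\int_{\R^n}\varphi(T,x)\,\dd\mu_T(x) - \int_{\R^n}\varphi(0,x)\,\dd\mu_0(x)$, the left-hand side of \eqref{eq:ce_eqn}. For the right-hand side I would apply Fubini--Tonelli to exchange the $\dd t$ integral with the $\dd\eta$ integral, and then for each fixed $t$ use that the integrand depends on $(y,\gamma)$ only through $\gamma(t)=e_t(y,\gamma)$ to obtain, again by $\mu_t = {e_t}_{\#}\eta$,
\[
\int_{\R^n\times\Gamma_T^\omega}\!\big[\partial_t\varphi(t,\gamma(t)) + \nabla_x\varphi(t,\gamma(t))\cdot\overline f_\omega(t,\gamma(t))\big]\,\dd\eta = \int_{\R^n}\!\big[\partial_t\varphi(t,x) + \nabla_x\varphi(t,x)\cdot\overline f_\omega(t,x)\big]\,\dd\mu_t(x).
\]
Reassembling the two sides yields precisely \eqref{eq:ce_eqn}.

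The step I expect to require the most care is the justification of Fubini--Tonelli, i.e.\ verifying that the double integrand is absolutely integrable on $[0,T]\times\R^n\times\Gamma_T^\omega$. Here I would exploit that $\varphi$ has compact support, so $\partial_t\varphi$ and $\nabla_x\varphi$ are bounded and supported in $[0,T]\times B_R$ for some ball $B_R$; combined with the linear-growth bound $|\overline f_\omega(t,x)| \le \beta(t)(1+|x|)$ from Assumption~\ref{assume:4} with $\beta \in \cL^1([0,T];\R_+)$, the integrand is dominated by $C\,\beta(t)(1+R)\mathbf{1}_{\{\gamma(t)\in B_R\}}$, which is $\dd\eta\,\dd t$-integrable because $\eta$ is a probability measure and $\beta \in \cL^1$. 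Measurability of the maps $(y,\gamma)\mapsto\dot\gamma(t)$ and $(y,\gamma)\mapsto\gamma(t)$ is already guaranteed by the Borel measurability of $\mathbf{d}_t$ and $e_t$ recorded in \eqref{eq:defVelEval} and \eqref{eq:eval_op}, so no further measurability argument is needed beyond invoking these. The remaining manipulations are then routine.
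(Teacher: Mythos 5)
Your proof is correct and follows essentially the same route as the paper's: both rest on the trajectory-wise chain rule, the observation that after substituting $\dot\gamma(t)=\overline f_\omega(t,\gamma(t))$ the integrand factors through the evaluation map $e_t$ (so the push-forward $\mu_t={e_t}_{\#}\eta$ applies), and the linear-growth bound of Assumption~\ref{assume:4} for integrability. The only difference is organizational: you integrate the fundamental-theorem-of-calculus identity over $[0,T]$ directly and justify the interchange via Fubini (using the compact support of $\varphi$ to localize, which lets you bypass the Gronwall estimate of Appendix~\ref{appendix:1}), whereas the paper first establishes absolute continuity of $t\mapsto\int\phi\,\dd\mu_t$ and then differentiates almost everywhere before integrating back.
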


\begin{proof}
	To derive \eqref{eq:ce_eqn}, we start by proving that the mapping $t \mapsto \int \phi(x) \dd\mu_t(x)$ is absolutely continuous,%
	\footnote{We may consider test functions $\varphi(t,x)=\rho(t)\phi(x)$ which are dense in $\cC^1(\R \times \R^n; \R)$, and then the differentiability of $\int \varphi(t,x) \dd\mu_t(x)$ depends on the absolute continuity of $\int \phi(x)\dd\mu_t(x)$ since
		\[ \frac{\dd}{\dd t}\int_{\R^n} \rho(t) \phi(x) \dd\mu_t(x) = \int_{\R^n} \partial_t(\rho(t))\phi(x) \dd \mu_t(x)+\int_{\R^n} \rho(t)\frac{\dd}{\dd t}\phi(x)\dd\mu_t(x).\]
		So we need to prove that $t\mapsto \int \phi(x)\dd\mu_t(x)$ is absolutely continuous.}
	 for compactly supported $\phi \in \cC^1(\R^n;\R)$. We then use the property of almost everywhere differentiability of absolutely continuous functions to differentiate $\int \phi(x) \dd\mu_t(x)$ w.r.t. time.
	
	\textit{Absolute continuity of $\int \phi \dd\mu_t$ }: Consider the pairwise disjoint intervals $(\ul t_i,\ol t_{i}) \subset [0,T]$, such that $\sum_{i=1}^N (\ol t_i - \ul t_i) < \delta$, for a given $\delta > 0$. 
	Choose $\phi \in \cC^1(\R^n;\R)$, then for any $\gamma \in {\Gamma}_T^\omega$ we have
	\begin{multline}\label{eq:1}
	\sum_{i=1}^N \phi(\gamma(\ol t_i))-\phi(\gamma(\ul t_i)) =\\ \sum_{i=1}^N \int_{(\ul t_i, \ol t_i)} \Big(\nabla_x\phi(\gamma(t)) \Big) \cdot \overline{f}_{\omega}(\gamma(t))\dd t.
	\end{multline}
	Integrating \eqref{eq:1} with $\eta \in \P(\R,\Gamma_T^\omega)$ leads to
	\begin{multline} 
	\sum_{i=1}^N \int_{\R^n \times \Gamma_T^\omega} \Big[\phi(\gamma(\ol t_i))-\phi(\gamma(\ul t_i))\Big] \dd\eta(x,\gamma) \\ = \sum_{i=1}^N \int_{(\ul t_i, \ol t_i)} \int_{\R^n \times \Gamma_T^\omega}\Big(\nabla_x\phi(\gamma(t)) \Big)\cdot \overline{f}_{\omega}(\gamma(t)) \dd\eta(x,\gamma) \dd t .
	\end{multline}
	Now using  $\mu_t = {e_t}_{\#}\eta$  on the left side of the above equation, taking the absolute values on both sides and then using H\"older's inequality we get
	\begin{multline}\label{eq:le2}
	\sum_{i=1}^N\Big|\int_{\R^n}\phi(x) \dd\mu_{\ol t_i}(x)-\int_{\R^n}\phi(x) \dd\mu_{\ul t_i}(x) \Big| \\
	\le ||\nabla_x\phi||_{\infty} \sum_{i=1}^N \int_{(\ul t_i, \ol t_i)} \int_{\R^n \times \Gamma_T^\omega} |\overline{f}_{\omega} (\gamma(t))| \dd\eta(x,\gamma) \dd t . 
	\end{multline}	
	Using the growth bounds on the vector field (Assumption~\ref{assume:4}), we can derive the estimate $\int_{\R^n\times \Gamma_T^\omega} |\overline{f}_{\omega}(\gamma(t))| \dd\eta(x,\gamma)\le K \beta(t)$ for some $K > 0$; refer to Appendix \ref{appendix:1} for details. Substituting this inequality in \eqref{eq:le2}, we get
	\begin{multline*}
	\sum_{i=1}^N \Big |\int\phi(x) \dd\mu_{\ol t_i}(x)-\int\phi(x) \dd\mu_{\ul t_i}(x) \Big| \\ \le K \| \nabla_x\phi \|_{\infty} \sum_{i=1}^N \int_{(\ul t_i,\ol t_i)} \beta(t) \dd t.
	\end{multline*}
	Since $\beta$ is integrable and $\sum_{i=1}^N (\ol t_i- \ul t_i) < \delta$ for an arbitrary $\delta > 0$, the right-hand side can be made arbitrarily small. This proves the absolute continuity of $t \mapsto \int \phi(x) d\mu_t(x)$.
	
	Next, we differentiate $\int \varphi(t,x) d\mu_t(x)$ for any $\varphi(t,x) \in \cC^1_c([0,T] \times \R^m)$ and we obtain the following (refer to Appendix \ref{appendix:2} for details),
	\begin{multline*}
	\int_{\R^n} \varphi(T,x) \dd\mu_T(x) - \int_{\R^n} \varphi(0,x) \dd\mu_0(x) \\= 
	\int_{[0,T]\times \R^n} \Big(\partial_t  \varphi(t,x) +   \nabla_x \varphi(t,x) \cdot \overline{f}_{\omega}(t,x)  \Big) \dd \mu_t(x)\dd t 
	\end{multline*}
	which shows the desired relation. 
\end{proof}
We now rewrite equation \eqref{eq:ce_eqn} in a form which we will use in the optimization problem proposed later in Section~\ref{sec:numerics}. Substituting the expression for $\overline{f}_{\omega}(t,x)$ from \eqref{eq:mean_field} in \eqref{eq:ce_eqn}, we get
\begin{multline}\label{eq:ce_ns}
\int_{\R^n} \varphi(T,x) \dd \mu_T(x) - \int_{\R^n} \varphi(0,x) \dd \mu_0(x) = \\ 
\int_{[0,T]\times \R^n} \Big(\partial_t  \varphi(t,x) +  \nabla_x \varphi(t,x) \cdot \int_{F(t,x)} v \; \dd\omega(v|t,x) \Big)\dd \mu_t(x) \dd t .
\end{multline} 
Rearranging the terms in the above equations and defining $\dd \hat{\mu}(t,x,v)=\dd\omega(v|t,x) \dd \mu_t(x)dt$, we get
\begin{multline}\label{eq:ce_nsd}
\int_{\R^n} \varphi(T,x) \dd \mu_T - \int_{\R^n} \varphi(0,x) \dd \mu_0 = \\
\int\limits_{[0,T]\times \R^n} \int\limits_{ F(t,x)} \Big[\partial_t \varphi(t,x) 
+ \nabla_x \varphi(t,x) \cdot v\Big] ~\dd\hat{\mu}(t,x,v).
\end{multline}
\begin{comment}
\begin{remark}
One can arrive at \eqref{eq:ce_nsd} by differentiating any $\varphi\in \cC^1([0,T]\times \R^n)$ along the trajectories $\gamma \in \Gamma_T$ and using the definition of occupation measures \eqref{eq:occ_meas_def}. Thus, above dynamical equation \eqref{eq:ce_nsd} is satisfied by the ocupation measures defined on the solution trajectories $\Gamma_T$ of $\dot{x}\in F(t,x)$.
\end{remark}
\end{comment}
Equation \eqref{eq:ce_nsd} will be the starting point for the next result as we will characterize all possible solutions $\hat{\mu}$ to it.
The solutions $\hat{\mu}$ will determine the vector field \eqref{eq:mean_field} (by defining $\omega$) and the solutions to the continuity equation driven by this vector field will lead to a measure concentrated on the trajectories of the derived vector field. 
\begin{comment}
\begin{remark}
$\overline\mu(t,x)$ is an occupation measure concentrated on the family of trajectories of system \eqref{eq:nsd}. to see this we integrate the $\overline{\mu}(t,x)$ on $A\times B \subset [0,T]\times \R^n$,
\begin{align*}
\overline{\mu}(A,B) = \int \I_A(t) \I_B(x) d\mu_t dt
\end{align*}
Using the superposition principle, there exists $\sigma \in \P(\Gamma_T)$ concentrated on the absolutely continuous solutions to the ODE system defined by the vector field \eqref{eq:di}. 
\begin{align*}
\int \phi(x) d\mu_t = \int_{\Gamma} \phi(x(t)) \sigma(x(\cdot)) 
\end{align*}
Substituting $\phi(\cdot) = \I_B$  in the above definition and then applying Fubini's theorem we can prove that,
\begin{align*}
\overline{\mu}(A,B) = \int_{\Gamma_T} \int_a^b \I_A(t) \I_B(x(t)) dt \sigma(x(\cdot)) 
\end{align*}
This shows that $\overline{\mu}(A,B)$ is an occupation measure conentrated on the family of trajectories of system \eqref{eq:nsd}.
\end{remark}
\end{comment}
\begin{theorem}\label{thm:sol_le}
	Consider system~\eqref{eq:di} under Assumption \ref{assume:3} and Assumption \ref{assume:4}.  Any measure $\hat{\mu}$ that solves the continuity equation \eqref{eq:ce_nsd} is of the form 
	\begin{equation}\label{eq:disIntegMeasure}
	\dd\hat{\mu}(t,x,v) = d\omega(v|t,x) d\mu_t(x) \dd t
	\end{equation}
	where  $\omega(\cdot|t,x) \in \P(F(t,x))$ and $\mu_t$ solves \eqref{eq:ce_eqn}. %with $\overline{f}_{\omega}(t,x)$ defined in \eqref{eq:mean_field}. %The measure $\mu_t$ defines $\eta$ concentrated on set of trajectories of \eqref{eq:mean_field} $\overline{\Gamma}\subset\Gamma$.
\end{theorem}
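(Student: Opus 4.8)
The plan is to recover the claimed product structure by applying the disintegration theorem and to extract the crucial fact that the time-marginal of $\hat\mu$ is Lebesgue measure directly from the continuity equation \eqref{eq:ce_nsd}.

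First I would regard $\hat\mu$ as a nonnegative finite Borel measure on $[0,T]\times\R^n\times\R^n$ which, by the very form of the integrals in \eqref{eq:ce_nsd}, is concentrated on the graph $\{(t,x,v): v\in F(t,x)\}$; this set is measurable since $F$ has a measurable graph under Assumptions \ref{assume:3}--\ref{assume:4}. Let $\bar\mu := (\p_{t,x})_{\#}\hat\mu$ be the projection of $\hat\mu$ onto the $(t,x)$ variables. The disintegration theorem yields a $\bar\mu$-a.e. uniquely determined family $\{\omega(\cdot|t,x)\}$ of probability measures with $\dd\hat\mu(t,x,v)=\dd\omega(v|t,x)\,\dd\bar\mu(t,x)$, and because $\hat\mu$ is concentrated on the graph of $F$, the fiber measure $\omega(\cdot|t,x)$ is supported on $F(t,x)$ for $\bar\mu$-a.e. $(t,x)$, as required.

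Second --- and this is the main point --- I would show that the time-marginal $\lambda:=(\p_t)_{\#}\bar\mu$ equals Lebesgue measure on $[0,T]$. To this end I would test \eqref{eq:ce_nsd} with functions depending only on time, $\varphi(t,x)=\rho(t)$. Since these are not compactly supported in $x$, I would first use $\varphi(t,x)=\rho(t)\chi_R(x)$ with $\chi_R$ a smooth cutoff equal to $1$ on the ball of radius $R$, and let $R\to\infty$: the term carrying $\nabla_x\varphi=\rho(t)\nabla\chi_R$ is controlled by the growth bound of Assumption \ref{assume:4} together with the finiteness of $\hat\mu$ and vanishes in the limit, while the remaining terms converge by dominated convergence as $\mu_0,\mu_T$ are probability measures. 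This gives $\rho(T)-\rho(0)=\int_0^T\rho'(t)\,\dd\lambda(t)$ for every $\rho\in\cC^1([0,T])$. Choosing $\rho\in\cC^1_c((0,T))$ shows the distributional derivative of $\lambda$ vanishes on $(0,T)$, forcing $\lambda$ to be a constant multiple of Lebesgue measure there with no boundary atoms; taking $\rho$ with nonzero endpoint values fixes the constant to be $1$. Hence $\lambda=\dd t$, and a further disintegration of $\bar\mu$ in time produces probability measures $\mu_t$ with $\dd\bar\mu(t,x)=\dd\mu_t(x)\,\dd t$, establishing \eqref{eq:disIntegMeasure}.

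Finally I would substitute the product form \eqref{eq:disIntegMeasure} back into \eqref{eq:ce_nsd}. Because $\omega(\cdot|t,x)$ is a probability measure on $F(t,x)$, the inner integral collapses via $\int_{F(t,x)}\dd\omega(v|t,x)=1$ and $\int_{F(t,x)}v\,\dd\omega(v|t,x)=\overline{f}_{\omega}(t,x)$ by the definition \eqref{eq:mean_field}, so the right-hand side becomes exactly $\int_{[0,T]\times\R^n}[\partial_t\varphi+\nabla_x\varphi\cdot\overline{f}_\omega]\,\dd\mu_t(x)\,\dd t$; thus $\mu_t$ solves \eqref{eq:ce_eqn} driven by $\overline f_\omega$. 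I expect the main obstacle to be the truncation step in the third paragraph: justifying spatially non-compactly-supported test functions and showing the cutoff error vanishes requires care with the growth estimate of Assumption \ref{assume:4} and an integrability argument analogous to the one used in the proof of Proposition \ref{prop:ce} (Appendix \ref{appendix:1}). A secondary subtlety is matching the disintegrated slices $\mu_t$ with the prescribed boundary data $\mu_0,\mu_T$, which follows from the absolute continuity of $t\mapsto\int\phi\,\dd\mu_t$ established there.
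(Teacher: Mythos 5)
Your proposal is correct and follows essentially the same route as the paper: disintegrate $\hat\mu$ over its $(t,x)$-marginal via the disintegration theorem, identify the time-marginal of that projection as Lebesgue measure on $[0,T]$, disintegrate once more in time to obtain $\mu_t$, and substitute back to recover \eqref{eq:ce_eqn} driven by $\overline f_\omega$. The only difference is cosmetic: the paper identifies the time-marginal by testing with the monomials $\varphi(t,x)=t^k$ and matching moments, whereas you use general $\cC^1$ functions of time together with a spatial cutoff --- which is, if anything, more careful about the fact that such test functions are not compactly supported in $x$, a point the paper's own proof glosses over.
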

\begin{proof}
	\begin{comment}
	The proof will be carried in three steps:
	\begin{itemize}
	\item Show that the measure $d\hat{\mu}(t,x,\dot{x})$ can be decomposed as a product of $d\overline{\mu}(t,x)$ and $dw(\dot{x}|t,x)$. 
	\item Further, $\overline{\mu}$ can be decomposed as $d\overline{\mu}(t,x) = d\mu_t(x)dt$.
	\item Conclude that $\mu_t$ will concentrated on the family of  solution trajectories of \eqref{eq:di}.
	\end{itemize}
	\end{comment}
	In Euclidean space $\R^n$, we can use the disintegration theorem \citep[Corollary 10.4.13]{Bogachev_2007} to write $\dd\hat{\mu}(t,x,v)= \dd\omega(v|t,x)d\overline{\mu}(t,x)$ where $\omega(\cdot|t,x) \in \P(F(t,x))$.  Using this we can rewrite 
	\begin{multline}
	\int_{\R^n} \varphi(T,x) \dd\mu_T - \int_{\R^n} \varphi(0,x) \dd\mu_0 = \\
	\int\limits_{[0,T]\times \R^n} \int\limits_{ F(t,x)} \Big[\partial_t \varphi(t,x) 
	+ \nabla_x \varphi(t,x) \cdot \zeta\Big] ~\dd\omega(\zeta|t,x) \dd\overline\mu(t,x) .
	\end{multline}
	Rearranging the terms results in
	\begin{multline}\label{eq:thm1_main_eqn}
	\int_{\R^n} \varphi(T,x) \dd\mu_T(x) - \int_{\R^n} \varphi(0,x) \dd\mu_0(x) =\\ 
	\int_{[0,T]\times \R^n} 	\Big(\partial_t  \varphi(t,x) +   \nabla_x \varphi(t,x) \cdot \overline{f}_{\omega}(t,x)  \Big)\dd\overline{\mu}(t,x) 
	\end{multline} 
	where $\overline{f}_{\omega}$ is defined as
	\begin{align}
	\overline{f}_{\omega}(t,x) = \int_{F(t,x)} \zeta \dd \omega(\zeta|t,x) ~ \mathrm{for}~ \omega(\cdot|t,x) \in \P(F(t,x))
	\end{align}
	and spans the set $F(t,x)$ as we have assumed $F(t,x)$ to be convex for every $t \in [0,T]$. 
	
	\textit{Decomposition of $\overline{\mu}$:}
	Next we show that the marginal of $\overline{\mu}(t,x)$ w.r.t. time is  a Lebesgue measure. This can be shown by taking $\varphi(t,x) = t^k$ for some $k \ge 0$ in \eqref{eq:thm1_main_eqn}, then we get
	\begin{gather}
	\mu_T(\R^n) T^k - \int_{\R^n} t^k \dd\mu_0=   \int_{[0,T]\times \R^n} kt^{k-1}\dd\overline{\mu}(t,x)
	\end{gather}
	where taking $k=0$ gives $\mu_T(\R^n) = \mu_0(\R^n)$ and for $k\ge 1$ results in  $ \frac{\mu_T(\R^n) T^k}{k} = \int_{[0,T]\times \R^n} t^{k-1}\dd\overline{\mu}(t,x)$.
	So, up to scaling we can write $\overline{\mu}(\dd t,\dd x) = \mu_t(\dd x) \dd t$. Substituting this we arrive at the following continuity equation
	\begin{multline}
	\int_{\R^n} \phi(T,x) \dd\mu_T - \int_{\R^n} \phi(0,x) \dd\mu_0 
	=\int\limits_{[0,T]\times \R^n}  \Big[\partial_t \phi(t,x) \\
	+ \nabla_x \phi(t,x) \cdot \overline{f}_{\omega}(t,x)\Big] \dd\mu_t(x)\dd t.
	\end{multline}
	Now using the results in \citep{Amb08}, the only solutions to the continuity equation have a representation in terms of measure $\eta \in \P(\R^n\times \Gamma)$ (as defined in Section \ref{subsec:sp}) as $\mu_t = {e_t}_{\#}\eta$ where $\eta$ are concentrated on the solution trajectories to system $\dot{x}(t)= \overline{f}_{\omega}(t,x(t))$. 
\end{proof}

\subsection{Measure Evolution for Constrained Systems}\label{ce_sweep}
Using the equation \eqref{eq:ce_nsd}, we arrive at the continuity equation for the system defined in \eqref{eq:nsd}, i.e., the equation
\begin{multline}\label{eq:le_nsd}
\int_{S(T)} \phi(T,x) d\mu_T(x) - \int_{S(0)} \phi(0,x) d\mu_0(x) 
\\=\int\limits_{[0,T]\times S(t)} \int\limits_{ f(t,x) - \cN_{S(t)}(x)} \Big[\partial_t \phi(t,x) 
+ \nabla_x \phi(t,x) \cdot \zeta\Big] ~d\hat{\mu}(t,x,\zeta)
\end{multline}
holds for every compactly supported $\phi \in \cC^1([0,T]\times \R^n; \R)$.
%The proof of uniqueness of the solutions to the transport equation is deduced by posing existence of $\cC^1(\R^m)$ solution to a boundary value problem\citep{Car10}. But this existence of solution to boundary value problem is not guaranteed for vector fields which are not Lipschitz.  So, we take a different route based on the superposition principle. Now given an initial condition $x_0\in\R^n$, the solutions to \eqref{eq:nsd} is unique \citep{SOUAIBY2023110836}. 
%
Using Theorem~\ref{thm:sol_le}, it follows that the measure solutions to the continuity equation have a representation~\eqref{eq:disIntegMeasure}. In this decomposition, $\omega(\cdot \vert t,x)$ represents the selection from the set $f(t,x) - \cN_{S(t)}(x)$ so that the resulting trajectories evolve within the set $S(t)$. Corresponding to such selections, the solutions to ODE \eqref{eq:nsd} are unique with $x(0) \in S(0)$, see for example \citep[Section~5]{BroTan20}. Consequently, we have that $\eta = \delta_{X_t(x_0)}$ and due to Proposition~\ref{prop:ce}, the measure $\mu_t = {e_t}_{\#}\eta$ corresponds to 
\begin{gather} \label{eq:ce}
\mu_t := {X_{t}}_{\#}\mu_0
\end{gather}
where $X_t$ denotes the flow map associated with system~\eqref{eq:nsd}.

\section{Functional Regularization}\label{sec:fun_reg}

As noted earlier, the right-hand side of system~\eqref{eq:nsd} is possibly discontinuous, and this introduces complexity in writing the transport equation for measures. The second approach that we propose relies on working with Lipschitz continuous approximations of the right-hand side of \eqref{eq:nsd} to generate a sequence of approximate solutions $\{x^{\lambda}\}_{\lambda > 0}$ parameterized by $\lambda > 0$. In particular, we work with the so-called {\it Moreau-Yosida regularization}, which for system~\eqref{eq:nsd} takes the following form:
\begin{equation}\label{eq:moreau_yosida}
\begin{aligned}
\dot{x}^\lambda(t) &= g_t^{\lambda}(x^\lambda(t)) \\
& := f(t,x^\lambda(t)) - \frac{1}{\lambda}(x^\lambda(t)-{\rm proj}(x^\lambda(t),S(t)))
\end{aligned}
\end{equation}
where we take $x^\lambda (0) = x(0) \in S(0)$, and ${\rm proj}(x^\lambda(t),S(t))$ refers to the projection of the vector $x^\lambda(t)$ onto the set $S(t)$ with respect to the Euclidean distance.
It is well known that the solution curve $\mul$ of the continuity equation with the Lipschitz regular vector field \eqref{eq:moreau_yosida} satisfies the following pushforward relationship
\begin{gather}\label{eq:fun_reg_repForm}
\mu^{\lambda}_t = {X^{\lambda}_t}_{\#} \mu_0
\end{gather}
where $X^{\lambda}_t(x_0)\coloneqq x^{\lambda}(t,x_0)$ is the flow map associated with \eqref{eq:moreau_yosida}. In \citep{SOUAIBY2023110836}, the authors show that the solution $x^{\lambda}(t)$ solving \eqref{eq:moreau_yosida} converges uniformly to the solution $x(t)$ of \eqref{eq:nsd} when $x^\lambda(0) = x(0) \in S(0)$, and that the measures $\mul$ converge in weak star topology to the measure solutions $\mu_t = {X_t}_{\#}\mu_0$, with $X_t$ being the flow map of \eqref{eq:nsd}. In this section, we provide quantitative bounds on the Wasserstein distance between measures $\mu_t$ and $\mul$.% similar to \eqref{eq:w2_bound}.

%\begin{comment}
%{\color{red} 
\begin{rem}
As a parallel to the regularization technique presented here, we find an approach based on mollification in \citep{Amb08} to study the evolution of measures for nonsmooth dynamical systems. 
Such mollification is carried out by using a convolution kernel $\psi : \R^n \to [0,\infty)$ with the properties that  $\psi(x)$ is bounded, measurable with $\psi(x)=\psi(-x)$, $\int \psi(x) dx = 1$. Let $\psi_{\epsilon} := \frac{1}{\epsilon^n}\psi(\frac{x}{\epsilon})$, and the corresponding convolution with a measure $\mu$ as $(\mu * \psi_{\epsilon})(x) := \int \psi_{\epsilon}(x-y)d \mu(y)$. For $\mu^{\epsilon} \coloneqq \mu * \psi_{\epsilon}$, it can be shown that
$W_2(\mu^{\epsilon},\mu) \le \epsilon \int{\vert \psi (x) \vert^2 \dd x}$.
In \citep{Amb08}, a similar mollification technique was used and the narrow convergence\footnote{Note: Family of measures $\mu_n$ converges narrowly to measure $\mu$ if $\lim_{n\to \infty}|\int fd\mu_n - \int fd\mu| \to 0$ for a bounded $f\in \cC(\Omega;\R)$, where $\Omega$ is any Polish space. Note that the definition is different from weak* convergence where the convergence is defined w.r.t. compactly supported continuous functions \citep{AmbrGigl05}. When the underlying space $\Omega$ is compact both the notions of convergence coincide.} of the measures was proven by working with a smooth vector field $g^{\epsilon} \coloneqq \frac{(g \mu)*\psi_\epsilon}{\mu^{\epsilon}}$ with the corresponding continuity equation $\partial_t\mu^{\epsilon}_t + \nabla\cdot{(g^{\epsilon}_t \mu^{\epsilon}_t)} = 0$.
\end{rem}
%}
%\end{comment}

\begin{theorem}\label{thm:fun_reg}
	Let $\mul\in\P(\R^n)$ be defined as in~\eqref{eq:fun_reg_repForm}, and let $\mu_t = {X_t}_{\#}\mu_0$, with $\mu_0 \in \cP(S(0))$ and $X_t$ being the flow map of \eqref{eq:nsd}. Then, the $W_1$ distance between $\mu_t$ and $\mul$ satisfies the following bound:
	\begin{gather} \label{eq:W1_bound}
	W_1(\mu _t,\mul)  \le   C_1  \sqrt{\frac{L_f \lambda(e^{L_f t}-1)}{2}} 
	\int_{S(0)}  \vert x_0 \vert d\mu_0(x_0)
	\end{gather}
	where $C_1 = L_f(1 + \kappa) + L_S$  and $\kappa := (e^{2L_f T}-1) \frac{2 L_f +L_s}{2 L_f}$.
\end{theorem}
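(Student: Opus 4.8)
The plan is to exploit the fact that both $\mu_t$ and $\mu_t^\lambda$ are push-forwards of the \emph{same} initial measure $\mu_0$, under the flow map $X_t$ of \eqref{eq:nsd} and the flow map $X_t^\lambda$ of the regularization \eqref{eq:moreau_yosida}, respectively. This suggests using the explicit (generally non-optimal) coupling $\theta := (X_t, X_t^\lambda)_\# \mu_0$, whose marginals are exactly $\mu_t$ and $\mu_t^\lambda$, so that $\theta \in \Theta(\mu_t,\mu_t^\lambda)$. Since $W_1$ is an infimum over couplings, this single plan already gives
\begin{equation*}
W_1(\mu_t, \mu_t^\lambda) \le \int_{\R^n \times \R^n} |x - y| \, \dd\theta(x,y) = \int_{S(0)} |X_t(x_0) - X_t^\lambda(x_0)| \, \dd\mu_0(x_0).
\end{equation*}
Thus the measure-level statement reduces entirely to a \emph{pointwise} trajectory estimate: bounding $|X_t(x_0) - X_t^\lambda(x_0)|$, the gap between the nonsmooth flow and its Moreau--Yosida approximation issued from the common initial state $x_0 \in S(0)$.

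For the pointwise bound I would carry out an energy estimate on $e(t) := X_t(x_0) - X_t^\lambda(x_0)$. Writing $\dot x = f(t,x) - \xi$ with $\xi \in \cN_{S(t)}(x)$ for the nonsmooth trajectory and $\dot x^\lambda = f(t,x^\lambda) - \tfrac{1}{\lambda}(x^\lambda - \p(x^\lambda,S(t)))$ for the regularized one, I differentiate $V(t) := \tfrac{1}{2}|e(t)|^2$. The drift terms contribute at most a multiple of $L_f|e|^2$ by the Lipschitz bound of Assumption~\ref{assume:1}. The two set-valued terms are handled by monotonicity of the normal cone: since $x(t) \in S(t)$, $\p(x^\lambda,S(t)) \in S(t)$, $\tfrac{1}{\lambda}(x^\lambda - \p(x^\lambda,S(t))) \in \cN_{S(t)}(\p(x^\lambda,S(t)))$ and $\xi \in \cN_{S(t)}(x)$, their inner products with $e$ split into manifestly nonpositive pieces plus a single cross term, which Young's inequality absorbs against the negative-definite contribution $-\tfrac{1}{\lambda}|x^\lambda - \p(x^\lambda,S(t))|^2$, leaving a residual proportional to $\lambda\,|\xi(t)|^2$. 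The resulting differential inequality has the schematic form $\dot V \le a\,L_f V + c\,\lambda\,|\xi(t)|^2$, and it is precisely the fact that the regularization residual is linear in $\lambda$ (not quadratic) that produces the characteristic $\sqrt{\lambda}$ rate after a Grönwall integration.

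It then remains to bound the magnitude of the normal-cone selection $\xi(t)$, equivalently the velocity of the constrained trajectory, uniformly in the data. Using the linear growth bound $|f(t,x)| \le L_f(1+|x|)$ together with the Lipschitz-in-time variation $d_H(S(t),S(s)) \le L_s|t-s|$ of Assumption~\ref{assume:2}, the constrained velocity, and hence $|\xi(t)|$, is bounded by an affine function of $\sup_{s \in [0,T]} |x(s)|$, which a Grönwall argument in turn controls affinely by $|x_0|$. Tracking these constants through $\kappa = (e^{2L_fT}-1)\tfrac{2L_f + L_s}{2L_f}$ yields a velocity factor of order $C_1|x_0|$ with $C_1 = L_f(1+\kappa) + L_s$. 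Substituting into the Grönwall solution of the energy inequality gives a pointwise estimate $|X_t(x_0) - X_t^\lambda(x_0)| \le C_1 \sqrt{\tfrac{L_f\lambda(e^{L_f t}-1)}{2}}\,|x_0|$, and integrating against $\mu_0$ reproduces exactly \eqref{eq:W1_bound}.

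The main obstacle I anticipate is the precise bookkeeping of the two normal-cone terms on a \emph{moving} convex set: the monotonicity inequalities must be invoked at the correct base points ($x$ versus the projection $\p(x^\lambda,S(t))$), and the Young split must be calibrated so that the surviving constant matches both the stated $C_1$ and the exact argument $\tfrac{L_f\lambda(e^{L_f t}-1)}{2}$ of the square root, including the interplay between the $L_f$ coming from the Lipschitz constant and the $L_s$ coming from the set's Lipschitz modulus. The pointwise trajectory estimate is essentially a quantitative refinement of the uniform convergence already established in \citep{SOUAIBY2023110836}, so the genuinely new effort lies in converting that qualitative convergence into the sharp $\sqrt{\lambda}$ rate with the explicit constant, which I would verify by tracking each inequality rather than by invoking convergence alone.
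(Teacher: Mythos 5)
Your proposal is correct in substance but follows a genuinely different route from the paper at both of its stages. At the measure level, you bound $W_1$ from above by exhibiting the explicit coupling $(X_t, X_t^\lambda)_\#\mu_0$ and using the primal (infimum-over-plans) definition; the paper instead invokes the Kantorovich--Rubinstein dual formula and then performs a first-order Taylor expansion of the Lipschitz test function $\phi$ in $\lambda$. Both land on the same intermediate inequality $W_1(\mu_t,\mu_t^\lambda)\le \int_{S(0)}|x^\lambda(t,x_0)-x(t,x_0)|\,\dd\mu_0(x_0)$, but your coupling argument is the cleaner of the two, since a $1$-Lipschitz $\phi$ need not be differentiable and the paper's use of $\nabla_x\phi$ requires an extra smoothing step it does not spell out. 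At the trajectory level, you run the energy estimate directly between the nonsmooth solution $\dot x=f(t,x)-\xi$, $\xi\in\cN_{S(t)}(x)$, and the regularized solution of \eqref{eq:moreau_yosida}, using monotonicity of the normal cone at the base points $x(t)$ and $\p(x^\lambda(t),S(t))$ and absorbing the cross term $\lambda\langle Y_\lambda(x^\lambda),\xi\rangle$ by Young's inequality against $-\lambda|Y_\lambda(x^\lambda)|^2$; the paper's Lemma~\ref{lem:fun_reg} instead compares two regularized solutions $x^\lambda$ and $x^\nu$, obtains a symmetric bound with prefactor $(\lambda+\nu)$, and then sends $\nu\to 0$ using the pointwise convergence $x^\nu\to x$ from \citep{SOUAIBY2023110836}. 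Your direct route buys a shorter argument but requires, as you correctly anticipate, an essential-supremum bound on the normal-cone selection $|\xi(t)|$ of the limit trajectory (an affine function of $|x_0|$ via linear growth, the Lipschitz motion of $S(\cdot)$, and Gr\"onwall), whereas the paper's limiting route only needs the analogous bound on the Yosida terms $|Y_\lambda(x^\lambda(t))|$, which is already available from the cited reference. One caveat: your final display, like the theorem itself, records a factor $|x_0|$ where the trajectory estimate actually produces the affine quantity $L_f(1+\kappa+|x_0|)+L_s$ (and $e^{L_f t}$ where the Gr\"onwall argument produces $e^{2L_f t}$); these discrepancies are inherited from the paper's own statement rather than introduced by you, but if you track the constants as promised you will obtain the lemma's form, not literally \eqref{eq:W1_bound}.
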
 
\begin{rem}
	Under the assumption that  $\supp(\mu_0)$ is compact, $\supp(\mu_t)$ and $\supp(\mul)$ will be compact as these are the push-forwards of Lipschitz continuous operators. This results in $W_2 (\mul, \mu_t) \le C_2 W_1(\mul,\mu_t)$ for some $C _2 > 0$ and hence a qualitatively similar bound holds for $W_2$ metric as the one indicated in \eqref{eq:W1_bound}.
\end{rem}
%	Thus, \[W_2 (\mul, \mu_t) \le C W_1(\mul,\mu_t)\] for some $C\ge 0$ and substituting the expression for $W_1(\mul,\mu_t)$ we get the desired bound on $W_2(\mul, \mu_t)$.
%Furthermore, in the case where $\supp(\mu_0)$ is bounded, \[W_2(\mul,\mu_t) \le C \sqrt{\frac{\lambda}{2}\frac{1-e^{L_f t}}{L_f}} 
%	\int |L_f(1+\kappa+|x_0|)+L_s| d\mu_0(x_0)\]
%	for some $C\ge0$.

We will use the following lemma in the proof of Theorem~\ref{thm:fun_reg}.
\begin{lemma} \label{lem:fun_reg}
	Let $\xl(t)$ be the solution to \eqref{eq:moreau_yosida} and $x(t)$ be the solution to \eqref{eq:nsd} with $x^\lambda (0) = x(0) \in S(0)$. Then, for each $t \in [0,T]$, it holds that
	\[
	|x^{\lambda}(t)-x(t)| \le  (L_f(1+\kappa+|x_0|)+L_s)  \sqrt{\frac{\lambda(e^{2L_f t}-1)}{2 L_f}}
	\]
	where $\kappa = (e^{2L_f T}-1) \frac{2 L_f +L_s}{2 L_f}$.		
\end{lemma}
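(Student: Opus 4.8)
The plan is to run a Grönwall-type energy estimate on the squared error $e(t) := \xl(t) - x(t)$, controlling the nonsmooth part through the monotonicity of the normal cone. Write $\dot x(t) = f(t,x(t)) - \xi(t)$ with $\xi(t) \in \cN_{S(t)}(x(t))$, and set $p^\lambda(t) := \mathrm{proj}(\xl(t),S(t))$ together with $v^\lambda(t) := \tfrac{1}{\lambda}(\xl(t)-p^\lambda(t))$, so that $\dot x^\lambda = f(t,\xl) - v^\lambda$. The characterization of the Euclidean projection onto the convex set $S(t)$ (Assumption~\ref{assume:2}) gives the two facts I will lean on throughout: $v^\lambda(t) \in \cN_{S(t)}(p^\lambda(t))$ and the exact Yosida identity $\xl - p^\lambda = \lambda v^\lambda$. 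Differentiating and using $e(0)=0$,
\[
\frac{\dd}{\dd t}\tfrac12 |e|^2 = \langle f(t,\xl) - f(t,x), e\rangle - \langle v^\lambda - \xi, e\rangle,
\]
where Assumption~\ref{assume:1} bounds the first term by $L_f|e|^2$.

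For the second term I would split $e = (\xl - p^\lambda) + (p^\lambda - x)$. Since $v^\lambda \in \cN_{S(t)}(p^\lambda)$ and $\xi \in \cN_{S(t)}(x)$ with $S(t)$ convex, monotonicity of the normal cone yields $\langle v^\lambda - \xi,\, p^\lambda - x\rangle \ge 0$, so this piece only helps. The remaining piece is $-\langle v^\lambda - \xi,\, \lambda v^\lambda\rangle = -\lambda|v^\lambda|^2 + \lambda\langle \xi, v^\lambda\rangle$, which after Young's inequality is at most $\tfrac{\lambda}{2}|\xi|^2$. Altogether this produces the differential inequality
\[
\frac{\dd}{\dd t}\tfrac12|e(t)|^2 \le L_f |e(t)|^2 + \tfrac{\lambda}{2}|\xi(t)|^2,
\]
which is exactly the structure generating the $\sqrt\lambda$ rate.

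The third ingredient is a uniform bound on the reaction $|\xi(t)|$. Here I would invoke the standard velocity estimate for the perturbed sweeping process, $|\dot x(t)| \le |f(t,x(t))| + L_s$ for a.e.\ $t$, valid because $S(\cdot)$ is $L_s$-Lipschitz in the Hausdorff distance (Assumption~\ref{assume:2}); since $\xi$ and $\dot x$ are the normal and tangential components of $f$ up to the motion of $S$, the same estimate gives $|\xi(t)| \le |f(t,x(t))| + L_s \le L_f(1+|x(t)|)+L_s$. Feeding the linear growth of $f$ (Assumption~\ref{assume:1}) back into the velocity estimate, a Grönwall argument yields the a priori state bound $|x(t)| \le |x_0| + \kappa$ on $[0,T]$ with $\kappa = (e^{2L_f T}-1)\tfrac{2L_f+L_s}{2L_f}$, whence $|\xi(t)| \le L_f(1+\kappa+|x_0|)+L_s =: M$, which is precisely the coefficient appearing in the statement.

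Finally I would integrate the energy inequality. With $e(0)=0$ and $|\xi|\le M$, Grönwall's lemma gives $\tfrac12|e(t)|^2 \le \tfrac{\lambda M^2}{2}\int_0^t e^{2L_f(t-s)}\,\dd s = \tfrac{\lambda M^2}{2}\cdot\tfrac{e^{2L_f t}-1}{2L_f}$, that is $|e(t)| \le M\sqrt{\lambda(e^{2L_f t}-1)/(2L_f)}$, the claimed bound. The hard part will be the second step: one must use both the monotonicity of $\cN_{S(t)}$ and the exact identity $\xl - p^\lambda = \lambda v^\lambda$ so that the nonsmooth interaction contributes only the $O(\lambda)$ term $\tfrac{\lambda}{2}|\xi|^2$ rather than an $O(1)$ term, since losing this structure would destroy the $\sqrt\lambda$ convergence rate. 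A secondary technical point is justifying the reaction bound $|\xi|\le |f|+L_s$ for the exact nonsmooth solution, which I would draw from the well-posedness theory for~\eqref{eq:nsd}.
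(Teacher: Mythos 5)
Your proof is correct, but it takes a genuinely different route from the paper's. The paper never compares $\xl$ directly with the nonsmooth solution $x$: instead it runs the energy estimate on the \emph{pair of regularized solutions} $\xl$ and $\xn$, uses the monotonicity of the two Yosida terms $Y_\lambda, Y_\nu$ evaluated at the respective projections, absorbs the cross terms by Cauchy--Schwarz and Young to get the $\tfrac{\nu}{2}|Y_\lambda|^2+\tfrac{\lambda}{2}|Y_\nu|^2$ remainder, applies Gr\"onwall, and only then sends $\nu\to 0$ using the previously established pointwise convergence $\xn(t)\to x(t)$ from \citep{SOUAIBY2023110836}. You instead estimate $|\xl(t)-x(t)|$ directly, writing $\dot x=f(t,x)-\xi$ with $\xi\in\cN_{S(t)}(x)$ and exploiting monotonicity between $v^\lambda\in\cN_{S(t)}(p^\lambda)$ and $\xi\in\cN_{S(t)}(x)$ together with the identity $\xl-p^\lambda=\lambda v^\lambda$; the split of $e$ into $(\xl-p^\lambda)+(p^\lambda-x)$ is the exact analogue of the paper's $\pm\p(\cdot,S(t))$ insertion, applied to the pair $(\xl,x)$ rather than $(\xl,\xn)$. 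What each buys: the paper's two-parameter argument needs no information about the reaction term of the limit solution — only uniform bounds on $|Y_\lambda|$, which it imports from prior work — but it must invoke the convergence $\xn\to x$ as an external fact to close the argument. Your one-shot argument is more self-contained on that front but shifts the burden onto the a priori bound $|\xi(t)|\le|f(t,x(t))|+L_s$ for the exact sweeping-process solution, which you correctly flag as coming from the well-posedness theory; this is standard but is a genuinely additional input not used by the paper. One small point: your Gr\"onwall state bound should read $|x(t)|\le e^{2L_f T}|x_0|+\kappa$ rather than $|x_0|+\kappa$, so your reaction bound $M$ acquires an $e^{2L_f T}$ factor on $|x_0|$ — but the paper's own proof has exactly the same factor in its intermediate estimate for $|Y_\lambda|$ while dropping it in the lemma statement, so your constant matches the lemma as stated and the $O(\sqrt\lambda)$ rate is unaffected either way.
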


\begin{proof}
	Let $x^{\lambda}(\cdot)$ and $x^{\nu}(\cdot)$ be solutions to \eqref{eq:moreau_yosida} corresponding to the regularization parameters $\lambda$ and $\nu$, respectively. Then
	\begin{multline*}
	\frac{1}{2}\frac{\dd}{\dd t}(|\xl(t)-\xn(t)|^2) = \langle \xl(t)-\xn(t),\dot{x}^{\lambda}(t)-\dot{x}^{\nu}(t)\rangle\\ 
	= \langle \xl(t)-\xn(t),f(t,\xl(t)) -\frac{1}{\lambda}(\xl(t)-{\rm proj}(\xl(t),S(t))) \\ -f(t,\xn(t))-\frac{1}{\nu}(\xn(t)-{\rm proj}(\xn(t),S(t)))\rangle. \\
	\end{multline*}
	Using the Lipschitz property of $f(t,x)$ and the Cauchy-Schwarz inequality for the terms involving  $\langle \xl(t)-\xn(t), f(t,\xl(t)) - f(t,\xn(t))\rangle$,  we obtain
	\begin{multline}\label{eq:fun_reg_1}
	\frac{1}{2} \frac{\dd}{\dd t}(|\xl(t)-\xn(t)|^2)\le L_f| \xl(t) -\xn(t)|^2 - \Big\langle \xl(t)-\xn(t), \\ -\frac{1}{\lambda}(\xl(t)-{\rm proj}(\xl(t),S(t)))  +\frac{1}{\nu}(\xn(t)-{\rm proj}(\xn(t),S(t)))\Big\rangle .
	\end{multline}
	We rewrite $\xl(t)-\xn(t) = \xl(t)-\p(\xl(t),S(t))+(\p(\xl(t),S(t))-\p(\xn(t),S(t))) -( \xn(t) - \p(\xn(t),S(t)))$ and substitute it in  \eqref{eq:fun_reg_1} to obtain
	\begin{multline}\label{eq:fun_reg_2}
	\frac{1}{2} \frac{\dd}{\dd t}(|\xl(t)-\xn(t)|^2)\le L_f|\xl(t)-\xn(t)|^2 + \\
	\Big\langle \xl(t) \pm \p(\xl(t),S(t)) - \xn(t)  \pm \p(\xn(t),S(t)), \\- \frac{1}{\lambda}(\xl(t)-\p(\xl(t),S(t)))
	+\frac{1}{\nu}(\xn(t)-{\rm proj}(\xn(t),S(t))))\Big\rangle .
	\end{multline}
	For notational convenience, we denote  $\yl=\frac{1}{\lambda}(\xl(t)-\p(\xl(t),S(t)))$, $\yn=\frac{1}{\nu}(\xn(t)-{\rm proj}(\xn(t),S(t)))$ and substitute these in \eqref{eq:fun_reg_2} to obtain
	\begin{multline}\label{eq:fun_reg_3}
	\frac{1}{2} \frac{\dd}{\dd t}(|\xl(t)-\xn(t)|^2) \le L_f|\xl(t)-\xn(t)|^2 + \\
	\Big\langle \lambda \yl +(\p(\xl(t),S(t))-\p(\xn(t),S(t)))  - \nu \yn,\\ - \yl+ \yn\Big\rangle.
	\end{multline}
	It is known that $\yl$ and $\yn$ satisfy the monotonicity property, i.e.
	\begin{gather}\label{eq:fun_reg_4}
	\langle -\yl + \yn, (\p(\xl(t),S(t))-\p(\xn(t),S(t))) \rangle \le 0
	\end{gather}
	since $\yl$ and $\yn$ are Moreau-Yosida regularizations of the $\cN_{S}(x)$ operator.
	We substitute \eqref{eq:fun_reg_4} in \eqref{eq:fun_reg_3} and get 
	\begin{multline}\label{eq:fun_reg_5}
	\frac{1}{2}\frac{\dd}{\dd t}(|\xl(t)-\xn(t)|^2)\le L_f|\xl(t)-\xn(t)|^2 \\+ 
	\langle \lambda \yl - \nu \yn, - \yl+ \yn\rangle.
	\end{multline}
	Using the Cauchy-Schwartz inequality for the second term on the right-hand side of \eqref{eq:fun_reg_5}, we get
	\begin{multline}
	\frac{1}{2}\frac{\dd}{\dd t}(|\xl(t)-\xn(t)|^2)\le L_f|\xl(t)-\xn(t)|^2 
	-\lambda |\yl|^2 - \\\nu |\yn|^2 
	+ (\lambda+\nu)|\yl| \cdot |\yn|.
	\end{multline}
	Next we use Young's inequality for the term $|\yl| \cdot |\yn|$ to obtain
	\begin{multline}\label{eq:fun_reg_6}
	\frac{1}{2} \frac{\dd}{\dd t}(|\xl(t)-\xn(t)|^2)\le L_f|\xl(t)-\xn(t)|^2 
	-\lambda |\yl|^2 -\\ \nu |\yn|^2   +\frac{(\lambda+\nu)}{2}(|\yl|^2+ |\yn|^2) \\
	\le L_f |\xl-\xn(t)|^2 
	+\frac{\nu}{2} |\yl|^2 + \frac{\lambda}{2}|\yn|^2 .
	\end{multline}
	In \citep{SOUAIBY2023110836}, uniform bounds on $|\xl(t)|$ were obtained, which lead to $|Y_{\lambda}(\xl(t))| \le L_f (1+\kappa +e^{2L_f T}|\xl(0)|) +L_s$, where $\kappa = (e^{2L_f T}-1) \frac{2 L_f +L_s}{2 L_f}$ and a similar bound on $\yn$ (see Appendix~\ref{appendix:3}). We use this bound in \eqref{eq:fun_reg_6} and obtain
	\begin{multline}\label{eq:fun_reg_7}
	\frac{1}{2}\frac{\dd}{\dd t}(|\xl(t)-\xn(t)|^2)  \le L_f|\xl(t)-\xn(t)|^2 \\+ \frac{(\nu + \lambda)}{2}|L_f(1+\kappa+e^{2L_f T}|\xl(0)|)+L_s|^2
	\end{multline}
	where we have used the fact that $\xl(0) = \xn(0) = x(0)$. Now, applying  Gronwall's lemma, we get
	\begin{multline}
	|\xl(t)-x^{\nu}(t)|^2  \\
	\le |L_f(1+\kappa+e^{2L_f T}|\xl(0)|)+L_s|^2 (\nu + \lambda)\frac{e^{2L_f t}-1}{2L_f}
	\end{multline}
	where the term involving $|\xl(0)-\xn(0)|$ in the right-hand side is zero. Next, we use the fact that $\lim\limits_{\nu\to 0} \xn (t) = x(t)$ \citep{SOUAIBY2023110836} holds pointwise and $\xn(0)=x(0)=x_0$ to obtain the desired bound.
	%	\begin{gather}
	%	 |x^{\lambda}(t)-x(t)| \le  |L_f(1+\kappa+e^{2L_f T}|x_0|)+L_s|  \sqrt{\frac{\lambda}{2}\frac{|1-e^{L_f t}|}{L_f}}.
	%	\end{gather}	
\end{proof}
\begin{proof}[Proof of Theorem \ref{thm:fun_reg}]  
	To get a bound on the distance between $\mu_t$ and $\mu^{\lambda}_t$, we use the dual characterization of the $W_1$ distance \citep{filipoOT}: 
	\begin{gather}\label{eq:defDualW1}
	W_1(\mul,\mu_t) = \sup_{\substack{\phi \in \cC(\Omega;\R),\\ ||\phi||_{\rm Lip}\le 1}} \int \phi \dd (\mul-\mu_t)
	\end{gather}
	where $\| \phi \|_{\rm Lip}$ denotes the Lipschitz modulus of $\phi$ and $\Omega$ is a measurable set containing all $S(t)$ for $t \in [0,T]$.   We use the representation formula for $\mul$ and $\mu_t$ to obtain,
	\begin{multline*}
	\sup_{\substack{\phi\in C(\Omega;\R)\\ ||\phi||_{\rm Lip}\le 1}} \int \phi \, \dd  (\mu^{\lambda}_t-\mu_t) \\
	=\sup_{\substack{\phi\in C(\Omega;\R)\\ ||\phi||_{\rm Lip}\le 1}} \int \phi(x^{\lambda}(t,x_0)) \dd\mu_0(x_0) - \int \phi(x^{\lambda}(t,x_0)|_{\lambda = 0})\dd\mu_0(x_0)
	\end{multline*}
	where $x^{\lambda}(t,x_0)|_{\lambda = 0} \coloneqq \lim_{\lambda \to 0} x^{\lambda}(t,x_0)$. 
	Using the first order Taylor expansion of $\phi(x^{\lambda}(t,x_0))$ w.r.t. $\lambda$, for each $\phi \in \cC(\Omega;\R) {\rm ~s.t.~} ||\phi||_{\rm Lip}\le 1~$,  we get,
	\begin{align}\nonumber
	\int& \phi(x^{\lambda}(t,x_0)) \dd\mu_0 - \int \phi(x^{\lambda}(t,x_0)|_{\lambda = 0})\dd\mu_0 \le \\ \nonumber
	\int& \Big[\phi(x^{\lambda}(t,x_0)|_{\lambda = 0}) + \nabla_x \phi(x^{\lambda}(t,x_0))|_{\lambda = 0}\cdot (x^{\lambda}(t,x_0)- \\ \nonumber &\hspace{24mm}x(t,x_0))\Big ] \dd \mu_0  - \int \phi(x(t,x_0)) \dd \mu_0\\ \nonumber
	=& \int \nabla_x \phi(x^{\lambda}(t,x_0))|_{\lambda = 0}(x^{\lambda}(t,x_0)-  x(t,x_0)) \dd\mu_0 \\ 
	\le&\int |\nabla_x \phi(x^{\lambda}(t,x_0))|_{\lambda = 0}|~ | (x^{\lambda}(t,x_0)-  x(t,x_0))| \dd\mu_0.
	\end{align}
	Using the fact that $\phi$ is of Lipschitz constant $1$, the above equation reduces to 
	\begin{gather}\label{eq:fun_reg_8}
	W_1(\mul,\mu_t) \le \int_{S(0)} |(x^{\lambda}(t,x_0)-  x(t,x_0))| d\mu_0(x_0).
	\end{gather}
	Using Lemma \ref{lem:fun_reg} in \eqref{eq:fun_reg_8}, we get the inequality in \eqref{eq:W1_bound}.
\end{proof}

%The above $W_2$ estimates between $\mul$  and $\mu_t$ gives a quantitative estimate of the rate of convergence. This bound establishes a parallel between the Moreau-Yosida based regularization and mollification of the measures \eqref{eq:mollify}.

\section{Time Discretization and Optimal Transport}\label{sec:td_pde}

In this section, we provide a construction of solutions to the continuity equation for \eqref{eq:nsd} using a time discretization scheme. %The method presented here relies on  tools presented in \citep{filipoOT} and \cite{marino2016}. %For the notations related to optimal transport the reader is referred to Section \ref{subsec:ot}. 
In what follows, we consider $\Omega \subset \R^n$ to be a compact set which contains $S(t)$, for all $t\in[0,T]$.

\subsection{Absolutely Continuous Curves of Measures}
In what follows, we consider the space of probability measures, with bounded second moment and equipped with the $W_2$ metric, which is denoted by $(\P_2(\Omega), W_2)$, see \citep{AmbrGigl05} for details. This metric space is called the Wasserstein space and it will be denoted as $\cW_2(\Omega)$.
We say that a curve $[0,T] \ni t \mapsto \mu_t \in ( \P_2(\Omega), W_2) $ is absolutely continuous if there exists $m(t) \in L^2([0,T])$ such that
\begin{gather}\label{eq:AC}
\lim_{h \to 0} \frac{W_2(\mu_t,\mu_{t+h})}{h} \le |m|(t)
\end{gather} 
holds for almost every $t \in [0,T]$. From \citep[Theorem ~1.1.2]{AmbrGigl05}, the metric derivative $\mu'(t)$ is defined such that $| \mu'|(t) \le \vert m \vert(t)$ holds for all functions $m \in L^2([0,T])$ satisfying \eqref{eq:AC}.

Another important characterization of absolutely continuous curves  in  the $\cW_2$ space is as follows. A curve $\mu_t : [0,T]\to \cW_2(\Omega)$ is absolutely continuous if and only if there exists a vector field $v_t$ with $||v_t||_{L_2(\mu_t)} = |\mu'|(t)$ and $\mu$ satisfies the continuity equation driven by the drift term $v_t$.

\subsection{Construction of Curves in Wasserstein Space}
Next we propose a construction of curves in $\cW_2(\Omega)$ through an interpolation between measures defined at discrete time instants using a discretization of the nonsmooth dynamical system \eqref{eq:nsd}. Consider a partition $\{ 0=t_0,t_1,... t_i,... t_N=T\}$ of time interval $[0,T]$ such that $t_{k+1}-t_k =\tau$. For  a fixed value of $\tau$, we now define the measures $\{\rhoe\}_{k \in \N}$ at time instants $t_k$ in a recursive manner. To do so, let $S_k := S(k\tau)$ which is a closed convex set under Assumption~\ref{assume:2}. We denote by $P_{S_k}$ the projection mapping onto the set $S_k$, and we consider the mapping $G^k : \R^n \to \R^n$, defined as
\[
x \mapsto G^k (x) := {P_{S_{k+1}}} \circ (\tau f_k(x) + x)
\] 
with $f_k(x) := f(t_k,x)$. 
The successor of $\mu_k^\tau$ is now defined as its push-forward under the mapping $G^k$ as follows:
\begin{equation}\label{eq:defSuccmu}
\mu_{k+1}^\tau := G^k_{\#} \rhoe = \Big[{P_{S_{k+1}}} \circ(\tau f_k(\cdot) + \id)\Big]_{\#} \rhoe.
\end{equation}
Similarly, for each $x \in S(k\tau)$, the velocity vector at time $(k+1)\tau$ is defined as 
\begin{equation*}
v^{\tau}_{k+1}(x) \coloneqq \frac{G^k(x)-x}{\tau} = \frac{{P_{S_{k+1}}} \circ (\tau f_k(\cdot)+ \id)(x)-x}{\tau} .
\end{equation*}
Next we consider the following two different interpolation curves which will serve different purposes:

(1) Geodesic interpolation between $\mu^{\tau}_k$ and $\mu^{\tau}_{k+1}$ over the interval $(t_k, t_{k+1}]$ by defining the transport maps
\[
G_t := \Big(\frac{t - k\tau}{\tau}G^* +\frac{(k+1)\tau -t }{\tau} \id \Big)
\] 
for each $t \in (k\tau,(k+1)\tau]$, for some optimal transport map $G^*$ between $\rhoe$ and $\rhoo$ and letting
\begin{multline}\label{eq:geo_trnsptmp}
\mu^{\tau}_t = {G_t}_{\#} \rhoe = \Big(\frac{k\tau -t }{\tau}G^* +\frac{(k+1)\tau -t }{\tau} \id \Big)_{\#} \rhoe.
\end{multline}
The map $G_t$ is injective and the proof is based on the c-cyclical monotonicity property of optimal transport maps \citep{filipoOT}. The interpolation of velocity vector is defined as 
\begin{gather}\label{eq:defVelTransport}
v^{\tau}_t(x) := v^{\tau}_{k+1} \circ (G_t)^{-1}(x) {\rm ~for}~~ t\in (k\tau,(k+1)\tau].
\end{gather}
The $L^2$ norm of velocity $v^{\tau}_t$ satisfies the following relation
\begin{multline}\label{eq:vel_wass_rel}
||v^{\tau}_t||_{L^2(\mu^{\tau}_t)} = \frac{W_2(\rhoe,\rhoo)}{\tau} = |(\mu^{\tau})'|(t)  \\{\rm for ~all~} t\in (k\tau, (k+1)\tau].
\end{multline}
We further define the momentum vector as 
\begin{gather}\label{eq:geo_mom}
E^{\tau}_{t} := v^{\tau}_{t} \mu^{\tau}_{t}
\end{gather}
and it satisfies $\partial_t \rhot + \nabla \cdot (E^{\tau}_t) = 0$.

(2) Piecewise constant interpolation curve such that 
\begin{align}\label{eq:const_trnsptmp}
\hmut_t &= \rhoo, \\
\hvt_t &= v^{\tau}_{k+1} \quad {\rm ~for}~~ t\in (k\tau,(k+1)\tau] \label{eq:const_vmp}
\end{align}
We also define the corresponding momentum vector as 
\begin{gather}\label{eq:pw_mom}
\hat{E}^{\tau}_t := \hvt_{t} \hmut_t.
\end{gather}
We will use piecewise constant interpolation to show that the limit velocity belongs to \eqref{eq:nsd}.

\subsection{Convergence Result}
Next we illustrate the convergence of the constructed curves $\rhot$, $\hmut_t$ to $\mu_t$  which is solution to the continuity equation associated with \eqref{eq:nsd}.

The results presented here align with those in \citep{marino2016} in the absence of drift term, i.e. when $f=0$. In such methods, the essential technical difficulty is in obtaining a priori estimates on the discretized trajectories which help establish the convergence results. The estimates like Wasserstein distance between the trajectories at two time instants depend on the bounds which are explicit for the sweeping process with drift term. % and they are substantially different for the drift free sweeping process.
\begin{theorem} \label{th:OT_CE}
	Consider system~\eqref{eq:nsd} under Assumption \ref{assume:1} and Assumption \ref{assume:2}, with $\mu_0 \in \P(S(0))$. For $\tau > 0$, and $t \in [0,T]$, let $\rhot$ and $v^{\tau}_{t}$ be defined as in \eqref{eq:geo_trnsptmp} and \eqref{eq:defVelTransport}, respectively. Then, as $\tau \to 0$, we get the following two convergence results:
	\begin{itemize}
		\item The measures $\rhot$ in \eqref{eq:geo_trnsptmp} and $\hat{\mu}^{\tau}_t$ in \eqref{eq:const_trnsptmp} converge uniformly in the $W_2$ metric to $\mu_t = {X_t} _{\#} \mu_0$, where ${X_t}$ is the flow map associated with \eqref{eq:nsd}; %The curve $\hat{\mu}^{\tau}_t$ \eqref{eq:const_trnsptmp} converges uniformly to $\mu_t$ in $W_2$ metric.
		
		\item  Momentum vectors  $E^{\tau}_{t} = v^{\tau}_t \rhot$ (defined in \eqref{eq:geo_mom}) and $\hat{E}^{\tau}_t = \hat{v}^{\tau}_t \hat{\mu}^{\tau}_t$ (defined in \eqref{eq:pw_mom}) converge to $E_t := v_t \mu_t$ in the weak star sense. Moreover, the velocity $v_{t}$ is such that $v_t \in f(t,x)-\cN_{S(t)}(x)$.
	\end{itemize}
\end{theorem}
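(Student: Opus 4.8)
The plan is to follow the minimizing-movements / time-stepping template of \citep{marino2016} and \citep[Ch.~3]{AmbrGigl05}, adapted to accommodate the drift term $f$. The first task is an a priori one-step estimate. Since each $\rhoe$ is supported in $S_k := S(k\tau)$ and the projection $P_{S_{k+1}}$ is nonexpansive, for $x \in S_k$ I would bound $|G^k(x) - x| = |P_{S_{k+1}}(\tau f_k(x) + x) - x|$ by combining the growth bound $|f_k(x)| \le L_f(1+|x|)$ (Assumption~\ref{assume:1}) with the Hausdorff estimate $d_H(S_k,S_{k+1}) \le L_s\tau$ (Assumption~\ref{assume:2}), yielding $|G^k(x) - x| \le C\tau$ uniformly on the compact set $\Omega$. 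This gives $W_2(\rhoe,\rhoo) \le C\tau$, so the discrete velocities $v^\tau_{k+1}$ are uniformly bounded in $L^2(\rhoe)$ and, through \eqref{eq:vel_wass_rel}, the geodesic interpolants $\rhot$ are uniformly Lipschitz curves in $\cW_2(\Omega)$.

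With this equicontinuity in hand, compactness is routine: because $\Omega$ is compact, $\cW_2(\Omega)$ is a compact metric space, and a refined Arzel\`a--Ascoli argument \citep[Prop.~3.3.1]{AmbrGigl05} extracts a subsequence with $\rhot$ converging uniformly in $W_2$ to an absolutely continuous limit curve $\mu_t$. Since $\hmut_t$ differs from $\rhot$ by $O(\tau)$ in $W_2$, it shares the same limit. Meanwhile the momenta $E^\tau_t$ and $\hat E^\tau_t$ have uniformly bounded mass and converge weak-star (along a further subsequence) to some $E_t$; joint lower semicontinuity of the Benamou--Brenier energy $(\mu,E) \mapsto \int |E|^2/\mu$ forces $E_t = v_t \mu_t$ for a field $v_t \in L^2(\mu_t)$, and the linearity of the continuity equation lets me pass $\partial_t \rhot + \nabla \cdot E^\tau_t = 0$ to the limit, so $(\mu_t, v_t)$ solves $\partial_t \mu_t + \nabla \cdot (v_t \mu_t) = 0$.

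The inclusion $v_t \in f(t,x) - \cN_{S(t)}(x)$ is where the piecewise-constant interpolation does the essential work. From the definition of $v^\tau_{k+1}$ I would rewrite $\tau\big(f_k(x) - v^\tau_{k+1}(x)\big) = (\tau f_k(x) + x) - P_{S_{k+1}}(\tau f_k(x)+x)$ and recall that for a closed convex set the projection residual $y - P_S(y)$ lies in $\cN_S(P_S(y))$; hence the discrete inclusion $v^\tau_{k+1}(x) \in f_k(x) - \cN_{S_{k+1}}(G^k(x))$ holds exactly. I would then test this against smooth functions, use the weak-star convergence of $\hat E^\tau_t$ together with the $W_2$-convergence of $\hmut_t$, and invoke the graph-closedness (maximal monotonicity, equivalently upper semicontinuity) of the time-varying operator $x \mapsto \cN_{S(t)}(x)$ under the Lipschitz variation of $S$ to conclude $v_t(x) \in f(t,x) - \cN_{S(t)}(x)$ for $\mu_t$-a.e.\ $x$ and a.e.\ $t$. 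Finally, since selections that keep trajectories inside $S(t)$ render the solution of \eqref{eq:nsd} unique for each deterministic $x_0 \in S(0)$ (cf.\ \citep{BroTan20} and Section~\ref{ce_sweep}), the limit is forced to equal $\mu_t = {X_t}_\# \mu_0$, and uniqueness of this limit upgrades subsequential convergence to convergence of the whole family.

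I expect the main obstacle to be the limit passage in the third step: one must control the mismatch between the base point $x$ and the projected point $G^k(x)$ (which coalesce only as $\tau \to 0$) while simultaneously showing that the limit measures concentrate on $S(t)$ and that the normal-cone inclusion is stable under weak-star limits. This is precisely where the Lipschitz-in-time regularity of $S(\cdot)$ and the compactness of $\Omega$ are indispensable, since they provide both the equicontinuity needed for compactness and the closedness of the normal-cone graph needed to identify the limiting velocity.
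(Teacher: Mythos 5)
Your proposal is correct and follows essentially the same route as the paper: a one-step $W_2$ estimate from the nonexpansiveness of the projection, the growth bound on $f$ and the Hausdorff bound on $S(\cdot)$; Arzel\`a--Ascoli compactness of the equicontinuous interpolated curves in $\cW_2(\Omega)$; weak-star compactness of the uniformly bounded momenta together with lower semicontinuity of the Benamou--Brenier functional to write $E_t = v_t\mu_t$; the discrete variational inequality $\langle y - G^k(x),\, v^\tau_{k+1}(x) - f_k(x)\rangle \ge 0$ tested against smooth functions and passed to the limit to obtain $v_t \in f(t,\cdot) - \cN_{S(t)}(\cdot)$; and finally uniqueness via the superposition argument of Section~\ref{ce_sweep} to identify the limit with ${X_t}_{\#}\mu_0$ and upgrade to full-family convergence. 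The only cosmetic difference is that you record the interpolants as uniformly Lipschitz in $W_2$ while the paper works with the (weaker but sufficient) uniform $\tfrac12$-H\"older bound obtained via H\"older's inequality.
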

\begin{proof}
	We split the proof into four parts: (1) Proof of convergence of $\rhot$ to $\mu_t$ and $\hat{\mu}^{\tau}_t\to \mu_t$;  (2)  Proof of convergence of $E^{\tau}_t$ and $\hat{E}^{\tau}_t$ to $E_t$; (3) Absolute continuity of $E_t$ with respect to $\mu_t$ such that $E_t = v_t \mu_t$; and (4) Convergence of $v_t^\tau$ to $v_t$ with the property that $v_t \in f(t,x) - \cN_{S(t)}(x)$. The first of these four items follows from the following result:
	\begin{lemma}\label{lemma:w2_bound_mu}
		Let $\rhoe$ and $\rhoo$ be defined as in \eqref{eq:defSuccmu} and \eqref{eq:geo_trnsptmp}. Then, it holds that
		\begin{gather}\label{eq:w2_bounds_4}
		W_2(\rhoo,\rhoe)\le  \tau( L_f C_{\max}+L_s )
		\end{gather}
		where the constants $L_f, L_s$ are defined in Assumption \ref{assume:1} and $C_{\max}$ is a constant that captures the uniform bound on $|x_k|$ at time instant $t_k$, independently of $k \in \N$.
	\end{lemma}
	\begin{proof}[Proof of Lemma~\ref{lemma:w2_bound_mu}]
		The mapping $G^k(x)$  defines a feasible transport map between $\rhoe$ and $\rhoo$. As the Wasserstein distance is defined to be the infimum over all feasible transport maps, we have
		\begin{gather}
		W_2^2(\rhoo,\rhoe) \le \int_{S_k} |{P_{S_{k+1}}} \circ (\tau f_k(\cdot)+ \id) (x) - x|^2 d\rhoe(x) \label{eq:w2_bounds_1} . 
		\end{gather}
		Next we use the triangle inequality to obtain
		\begin{multline}
		W_2^2(\rhoo,\rhoe)\le \int_{S_k} | P_{S_{k+1}} (\tau f_k(x)+ x) - P_{S_{k+1}}(x)|^2 d\rhoe(x) \\  + | {P_{S_{k+1}}}(x) - x|^2 \dd\rhoe(x).
		\end{multline}
		Projection operators on convex sets satisfy the nonexpansive property and we use this fact for the first term. For the second term, we use the definition of the Hausdorff distance to obtain
		\begin{gather} W_2(\rhoo,\rhoe) \le \Big(\int_{S_k} |\tau f_k(x)|^2 d\rhoe(x)\Big)^{1/2} + \tau \dd_H (S_k,S_{k+1}). 
		\end{gather}
		%		where, $S_k = S(k\tau)$ and $S_{k+1} = S((k+1)\tau)$.
		Using  Assumption \ref{assume:1} for the drift term $f_k(x)$, it holds
		\begin{multline} \label{eq:second_order_moment}
		W_2(\rhoo,\rhoe)  \le \Big(\int_{S_k}  \tau^2 |(L_f(1+|x|))^2| \dd\rhoe(x)\Big)^{1/2}  \\+ \tau d_H (S_k,S_{k+1}).
		\end{multline}
		Next, we establish a bound on the first term on the right-hand side of \eqref{eq:second_order_moment}. 
		We know that $\rhoe = {G^k_{\#}\circ G^{k-1}_{\#}\circ\cdots \circ G^{1}}_{\#} \mu_0 = (G^1\circ G^{2}\circ\cdots\circ G^{k})_{\#} \mu_0$.  Let $G^{k \dots 1} : = G^k \circ G^{k-1} \circ \cdots \circ G^1$, then it follows that
		\begin{gather}
		\int_{S_k} (1 + \vert x \vert )^2 d\rhoe(x) = \int_{S_0} (1+ \vert G^{k\dots 1}(x_0)\vert )^2 \dd{\mu}^{\tau}_0(x_0).
		\end{gather}
		Letting $x_k: = G^{k \dots 1} x_0$, it has been shown in \citep[Section~5]{BroTan20} that
		\begin{gather}
		|x_k| \le \wt C_1 |x_0| + \wt C_2%+ d_H(S(0),S(T)) + 2 C_{\beta})(1+e^{C_{\beta}}C_{\beta}) \coloneqq \tilde{C}_{\max}
		\end{gather}
		for some constants $\wt C_1, \wt C_2 > 0$ depending on the system data. Now using this uniform bound on $|x_k|$ in the first term on the right-hand side of \eqref{eq:second_order_moment}, we get
		\begin{align}\nonumber
		& \Big(\int_{S_k} \tau^2 L_f^2(1+|x|)^2 \dd\rhoe(x)\Big)^{1/2} \\ \nonumber
		&\qquad = \tau L_f	\Big(\int_{S_0} (1+|x_k|)^2 \dd\mu^{\tau}_0(x_0)\Big)^{1/2} \\ 
		&\qquad \le \tau L_f \Big(\int_{S_0}  (1+ \wt C_1 \vert x_0 \vert + \wt C_2)^2 \dd\mu^{\tau}_0(x_0)\Big)^{1/2} \label{eq:sad1} \\
		& \qquad =: \tau L_f C_{\max} \label{eq:sad2}
		\end{align}
		where the equality between \eqref{eq:sad1} and \eqref{eq:sad2} holds because $\mu^{\tau}_0 \in \P(S_0)$. Thus
		\begin{align*}
		W_2(\rhoo,\rhoe)&\le \tau L_f C_{\max} + \tau d_H(S_k, S_{k+1}) \\ &=\tau( L_f C_{\max}+L_s ) 
		\end{align*}
		where we used  Assumption \ref{assume:2} in the last equality.
	\end{proof}
	
	\textit{(1) Proof of convergence of $\rhot$ to $\mu_t$:}
	The convergence is based on computing the bounds on $W_2(\mu_t^{\tau},\mu_s^{\tau})$ for $s,t \in [0,T]$, where we recall that $\rhot$ are the interpolated measures. This is done by using the characterization of absolutely continuous curves \eqref{eq:AC}, i.e.
	\begin{gather}\label{eq:w2_bounds_5}
	W_2{(\rhot,\mu^{\tau}_{s})} \le \int_s^t \vert (\mu^{\tau})'\vert (r)  \dd r \ 
	\end{gather}
	and using H\"older's inequality leads to
	\begin{align}
	\int_s^t|(\mu^{\tau})'| (r) dr &\le (t-s)^{1/2} \Big(\int_s^t \vert(\mu^{\tau})' (r) \vert^2 \dd r\Big)^{1/2} \\ &\le (t-s)^{1/2} \Big(\sum_k \tau \Big( \frac{W_2(\rhoo,\rhoe)}{\tau}\Big)^{2} \Big)^{\frac{1}{2}}.\label{eq:w2_bounds_6}
	\end{align}
	Using the bounds on $W_2(\rhoo,\rhoe)$ from \eqref{eq:w2_bounds_4}, we get
	\begin{align} \label{eq:w2_bounds_7} \nonumber 
	\sum_k \tau\Big(\frac{W_2(\rhoo,\rhoe)}{\tau}\Big)^{2} &\le \sum_k  (L_s+L_f C_{\max})^2 \tau \\ &= (L_s+L_f C_{\max})^2 T.
	\end{align}
	Substituting \eqref{eq:w2_bounds_7} in \eqref{eq:w2_bounds_6}, we obtain
	\begin{gather}\label{eq:w2_bound_f}
	W_2{(\rhot,\mu^{\tau}_{s})}  \le (t-s)^{1/2}(L_s+L_f C_{\max}) T^{1/2}.
	\end{gather}
	Thus, the curves are uniformly $\frac{1}{2}$H\"older continuous. Moreover for each  $t \in [0,T]$,  $\rhot$ lie in the $\cW_2(\Omega)$ space, which is compact. Thus, we can apply the Ascoli-Arzel\`a theorem (for the H\"older continuous functions) i.e., there exists a subsequence $\tau_j$ for which $\mu^{\tau_j}_t \to \mu_t$ uniformly in $W_2$ space and the limit curve $\mu_t$ is absolutely continuous.
	
	Similar to \eqref{eq:w2_bound_f}, one can derive bounds for $\hmut_t$ and conclude that $\hat{\mu}^{\tau_j}_t \to \hmu_t$. Moreover, the limit curves are the same, since 
	\[
	W_2{(\hat{\mu}^\tau_t,\rhot)}  \le (\tau)^{1/2}(L_f C_{\max} + L_s) T^{1/2}.
	\] 
	The last inequality holds because the curve $\hmut_t$ coincides with $\rhot$ at $k\tau$ and they  are constant on the interval $(k\tau, (k+1)\tau]$. Thus, both  curves converge to the same limit curve $\mu_t$.

	\textit{(2) Proof of convergence of $E^{\tau}_t$:}
	In order to study the convergence properties of the velocity vector, we need to investigate the convergence properties of a family of momentum vectors $E^{\tau}_t = v^{\tau}_t \mu^{\tau}_t$ which is a vector measure\footnote{The space of vector valued measures $\cM^n(\Omega)$ is a normed space dual to $\cC(\Omega;\R^n)$. Under this duality the notion of weak star convergence is defined which further implies that bounded sets in $\cM^n(\Omega)$ are weak star compact.} $E^{\tau}_t \in \cM^n(\Omega)$. 
	We define $m^{\tau}\in \cM^n([0,T]\times\Omega)$ as $m^\tau := v^{\tau}_t \mu^{\tau}_t \dd t$.
	\begin{lemma}
		The norm of $m^{\tau}$ satisfies the following bound:
		\begin{align}\label{eq:uniBoundEtau}
		|m^{\tau}| ([0,T]\times \Omega) \le T^{\frac{3}{2}} (L_f C_{\max}+L_s).
		\end{align}
	\end{lemma}
	\begin{proof}
		By definition
		\begin{align*}
		|m^{\tau}| ([0,T]\times \Omega) &= \int_{[0,T]} \dd t \int_{\Omega} | v^{\tau}_t| d\rhot .
		\end{align*}
		Using the Cauchy-Schwarz inequality and then \eqref{eq:vel_wass_rel} we get 
		\begin{align*}
		|m^{\tau}| ([0,T]\times \Omega)&\le T^ {1/2} \int_{[0,T]} ||v^{\tau}_t||_{L^2(\rhot)} dt\\
		&\le T^ {1/2} \sum_k \frac{W_2(\rhoo,\rhoe)}{\tau}.
		\end{align*}
		Using Lemma \ref{lemma:w2_bound_mu}, we further obtain
		\begin{align*}
		|m^{\tau}| ([0,T]\times \Omega)&\le T^ {1/2} \sum_k ( L_f C_{\max} + L_s )\\
		&= T^{\frac{3}{2}} (L_f C_{\max}+L_s )
		\end{align*}
		which is the desired bound.		
	\end{proof}
	So, $m^{\tau}$ is uniformly bounded and thus compact under weak convergence in the space of vector-valued measures on $[0,T]\times \Omega$.  We conclude that up to a subsequence $m^{\tau} \weak m$ and thus $E^{\tau}_t \weak E_t$.
	For $\hat m^\tau := \hat v_t^\tau \hat \mu_t^\tau$, a similar bound holds, i.e. $|\hat{m}^{\tau}|\le T^{\frac{3}{2}} (L_f C_{\max}+L_s)$.	 Using the same arguments one concludes $\hat{m}^{\tau}\weak \hat{m}$ and thus $\hat{E}^{\tau}_t \weak \hat{E}_t$. Moreover, using 
	\citep[Lemma 8.9]{filipoOT} we conclude that $\hat{E}_t = E_t$.
	
	Next, we discuss about the properties of the limit object $E_t$ and show that $E_t$ is absolutely continuous with respect to $\mu_t$, such that $E_t=v_t\mu_t$, for each $t\in [0,T]$.
	
	\textit{(3) Absolute continuity of $E_t$:} At this point, we recall the properties of then Benamou-Brenier functional $\cB(\mu, E)$ defined as follows.
	For $\mu \in \P(\R^n)$ and $E \in \cM^n(\R^n)$, let  
	\begin{multline*}
	\cB(\mu, E) := \! \! \sup_{\substack{a\in \cC(\R;\R),b \in \cC(\R^n;\R^n) \\ a,b \text{ are bounded}\\ a+\frac{1}{2}|b|^2\ge 0 ~\text{pointwise}}} \int_{\R^n} a(x) \, \dd \mu(x) + \int_{\R^n} b(x) \, \dd E(x)
	\end{multline*}
	which has the following properties:
	\begin{itemize}\label{benamou_functional}
		\item $\cB(\cdot, \cdot)$ is convex, lower semicontinuous, and non-negative;
		\item  $\cB(\mu,E) = \frac{1}{2}\int |v|^2 \dd \mu$ only if $E=v \mu$ is absolutely continuous with respect to $\mu$ and $\cB(\mu,E)$ is $\infty$ otherwise.
	\end{itemize}
	Note that $\cB(\mu_t^{\tau}\dd t,E_t^{\tau}\dd t) = \int_{[0,T]} \int_{\Omega}  |v_t^{\tau}|^2 \dd \rhot \dd t$. Now using the uniform bound on $\vert m^{\tau}\vert $ ( thus on $\int_{[0,T]} \int_{\Omega}  |v_t^{\tau}|^2 \dd \rhot \dd t$) in \eqref{eq:uniBoundEtau} and the lower semi-continuity of $\cB(\cdot,\cdot)$, we get
	\[
	\cB(\mu_t \dd t, \dd m) \le  \liminf_{\tau \to 0} \cB(\rhot \dd t, \dd m^{\tau})< \infty
	\]
	where $\dd m$ represents the limit of $\dd m^\tau$, as $\tau \to 0$. We can now invoke the second property mentioned above which implies that $\dd m$ is absolutely continuous with respect to $\dd\mu_t \dd t$. Thus, there exists $v_t$ such that  $\dd m = v_t \mu_t \dd t$. 	Similarly, $E_t$ is absolutely continuous with respect to $\mu_t$ and $E_t = v_t \mu_t$.
	\begin{comment}
	Using the semi-continuity property, 
	\begin{gather}
	\int_a^b ||v_t||^2 d \mu dt \le \liminf_k \int_a^b ||v^{\tau}_k||^2 d \mu_k dt \le  \int |\mu'|^2(t) dt
	\end{gather}
	Similarly, one can obtain the other direction of inequality by exploiting the convexity property of $\B_p$ functional.
	\end{comment}
	
	(4) \textit{Proof that $v_t(x) \in f(t,x) - \cN_{S(t)}(x)$:} Next we show that the velocity $v_t$ belongs to the admissible set of velocities. By construction of the velocities $v_{k+1}^{\tau}$, for every $x\in S_k$, and every $y \in S_{k+1}$, we have
	\[
	\big\langle y-{P_{S_{k+1}}} \circ (\tau f_k(\cdot)+ \id)(x)),\frac{({P_{S_{k+1}}} \circ (\tau f_k(\cdot)+ \id)(x)-x)}{\tau} \big\rangle \ge 0
	\]
	which results in
	\[ 
	\langle y-{P_{S_{k+1}}} \circ (\tau f_k(\cdot)+ \id)(x)), v^{\tau}_{k+1}(x)-f_k(x) \rangle \ge 0,
	\]
	i.e., $v^{\tau}_{k+1}(x)-f_k(x)$ is in the normal cone to set $S_{k+1}$.	
	The above condition should hold in the integral form for any smooth positive function $h(t,x)$ i.e. 
	\begin{multline}\label{eq:vel_proof}
	\int_{[0,T]}\int_{\Omega} \Big\langle h(t_k,x)(y-{P_{S_{k+1}}} \circ (\tau f_k(\cdot)+ \id)x),\\ (v^{\tau}_{k+1}(x)-f_k(x)) \dd \rhoe(x) \Big\rangle \ge 0, ~~\forall y \in S_{k+1} .
	\end{multline}
	Next we can extend  \eqref{eq:vel_proof} to piecewise constant interpolated curves \eqref{eq:const_trnsptmp} and corresponding velocities \eqref{eq:const_vmp}. We make a passage from discrete time $k\tau$ to $ t \in (k\tau, (k+1)\tau]$ by identifying $v^{\tau}_{k+1} \dd\rhoe =  \hat{v}^{\tau}_t \dd\hmut_t = \dd \hat{E}^{\tau}_t$ and similarly, $\dd \mu_k^\tau = \dd \hat \mu_t^\tau$. Next, we use the convergence results established in the first two steps of the proof, so that, in the limit as $\tau \to 0$, we obtain
	\begin{multline*}
	\int_{[0,T]}\int_{\Omega} \Big\langle h(t,x)\big(y-x\big) , \dd E_t(x) \Big\rangle - \\
	\int_{[0,T]}\int_{\Omega} \Big\langle h(t,x)\big(y-x\big),f(t,x) \dd\mu_t(x) \Big\rangle\ge  0, \quad \forall y \in S(t).
	\end{multline*}
	As we have already established that $E_t=v_t \mu_t$ and since $h(t,x)$ is an arbitrary positive function, we get
	\begin{gather*}
	\big\langle y-x, v(t,x)-f(t,x) \big\rangle \ge 0, \quad \forall\, y \in S(t).
	\end{gather*}
	Thus, $v(t,x) - f(t,x) \in -\cN_{S(t)}(x)$.
	
	\textit{Uniqueness of solutions:~}
	The uniqueness of solutions follows from the same argument as presented in Section~\ref{ce_sweep} based on the superposition principle. Furthermore, it also states that the solution has a representation formula as $\mu_t = {X_t}_{\#} \mu_0$.
\end{proof}

\section{Numerical Results}\label{sec:numerics}
\subsection{Moment-SOS hierarchy}
In Section \ref{sec:ce_di}, we derived the continuity equation  \eqref{eq:ce_nsd} associated with dynamical system \eqref{eq:nsd} using the superposition principle. The problem of computing the evolution of a probability measure  through a dynamical system can be interpreted as a feasibility problem where the feasible set is an affine section (modeled by  the continuity equation) of the cone of non-negative measures (supported on the trajectories). For notational convenience, we rewrite the continuity equation in \eqref{eq:ce_nsd} as 
\begin{gather}\label{eq:numeric_le_eqn}
\partial_t\mu +\nabla\cdot (\overline{f}_{\omega}\mu) + \delta_T \cdot \mu_T = \delta_0 \cdot \mu_0
\end{gather}
where $\overline{f}_{\omega}$ is defined in \eqref{eq:mean_field} and the equation should be understood in the weak sense, i.e. when integrated against sufficiently smooth test functions as in \eqref{eq:le_nsd}. Given an initial distribution $\mu_0$  with $\supp(\mu_0) \subset \cX_0 \subset S(0)$, we formulate the problem of evolution of measures in \eqref{eq:nsd} as the following infinite dimensional optimization problem:
\begin{align}\label{eq:sdp_program}\nonumber
&{\rm Find ~ \mu, \mu_T ~such ~that}\\ \nonumber
& \partial_t\mu +\nabla\cdot (\overline{f}_{\omega}\mu) + \delta_T \cdot \mu_T = \delta_0 \cdot \mu_0 \\ \nonumber
&\mu \ge 0, \mu_0 \ge 0, \mu_T \ge 0\\ %\nonumber
&\supp(\mu) \subset [0,T] \times B, \quad  
\supp(\mu_T) \subset S(T),
\end{align}
where $B = \{ (x,v) | x\in \Omega \subset \R^n , v\in F(x) \}$. This is an infinite dimensional linear program. In this section, we present numerical results obtained using the moment-SOS hierarchy to solve this linear program in terms of approximate moments of measures $\mu$ and $\mu_T$. The moments are generated by integration on a dense set of functions $\phi$ in $\cC^1([0,T],\R^n)$ as in \eqref{eq:ce_nsd}. For notational convenience, we use the monomial basis.

\textbf{Numerical Example:}
Consider the nonsmooth system 
\begin{gather}\label{eq:example2d}
\dot{\mathbf{x}}(t) \in (1,0)-\cN_{S}(\mathbf{x}(t))
\end{gather}
where $S =\{\mathbf{x} = (x_1,x_2) \in \R^2: x^2_1 + x^2_2 \le 1\} $.
For this problem, we decompose the measure solution $\mu$ of \eqref{eq:le_nsd} into two measures: $\mu_{S}$ supported in the disc and $\mu_{\partial S}$ supported on the boundary of the disc. In particular, we let
$\cX_S \coloneqq \supp{(\mu_S)}  =\{0\le t\le 1 \} \times \{(\mathbf{x},\mathbf{v}) :
x^2_1+x^2_2 \le 1, v_1 = 1, v_2 = 0\}$, and $
\cX_{\partial S} \coloneqq \supp{(\mu_{\partial S})} = \{0\le t\le 1 \} \times \{ (\mathbf{x},\mathbf{v}) : x^2_1+x^2_2 = 1, x_1 v_2 = x_2 v_1 \}$.
The continuity equation \eqref{eq:le_nsd} can be expressed as
\begin{multline}\label{eq:numeric_ce_eqn}
\int_{\cX_T} \phi(T,\bx) \dd\mu_T(\bx) - \int_{\cX_0} \phi(0,\bx) \dd\mu_0(\bx)\\ 
= \int\limits_{\cX_S} \Big[\partial_t \phi(t,\bx) + 
\nabla_\bx \phi(t,\bx) \cdot \bv \Big] ~\dd\mu_{S}(t,\bx,\bv)
\\
+\int\limits_{\cX_{\partial S}} \Big[\partial_t \phi(t,\bx) 
+ \nabla_\bx \phi(t,\bx) \cdot \bv \Big] ~\dd\mu_{\partial S}(t,\bx,\bv).
\end{multline}
To approximate the moments of the measures satisfying the above equation we use the monomial basis. Let $R[\bx]$ be the ring of multivariate polynomials and $R_k[\bx] \subset R[\bx]$ be the vector space of polynomials of degree not exceeding $k$. Then the monomial basis of $R_k[\bx]$ can be expressed as
$\phi(t,\bx) := t^a \bx^b \coloneqq t^a x_1^{b_1} x_2^{b_2}..x_n^{b_n} $ where $a+ b_1+b_2+.. +b_n \le k$. 
\begin{proposition}\label{prop:lmi}
	Let $m^T_{a,b} := T^a \int_{\cX_T} \bx^b \dd\mu_T(\bx)$ and $m^0_{a,b} := 0^a \int_{\R^n} \bx^b \dd\mu_0(\bx)$ be the moments of $\mu_T$ and $\mu_0$, respectively. Then, using the monomial basis in \eqref{eq:numeric_ce_eqn}, we get
	\begin{multline}\label{eq:lmi}
	m^T_{a,b} - m^0_{a,b} = am^S_{a-1,b} + am^{\partial S}_{a-1,b} \\+  \sum_{i=1}^n \int_{\cX_{\partial S}} b_i t^a \bx^{b-e_i} v_i \dd\mu_{\partial S}(t,\bx,\bv) \\+
	\sum_{i=1}^n \int_{\cX_S} b_i t^a \bx^{b-e_i} v_i \dd\mu_S(t,\bx,\bv),
	\end{multline}
	where $m^{S}_{a-1,b}:=\int_{\cX_{S}}t^{a-1} \bx^b \dd\mu_S(t,\bx,\bv)$, $m^{\partial S}_{a-1,b}:=\int_{\cX_{\partial S}}t^{a-1} \bx^b \dd\mu_{\partial S}(t,\bx,\bv)$, and $e_i = (0,..,1,..,0)$ is the vector with one at the $i$-th entry.
\end{proposition}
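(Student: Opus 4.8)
The plan is to obtain \eqref{eq:lmi} by directly inserting the monomial test function $\phi(t,\bx)=t^a\bx^b$ into the weak continuity equation \eqref{eq:numeric_ce_eqn} and then identifying each resulting integral with its defining moment. Since \eqref{eq:numeric_ce_eqn} is an identity that holds for an entire class of test functions, the whole content of the proposition is the specialization to polynomials together with the elementary differentiation of a monomial.

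The one genuine point to justify first is admissibility: \eqref{eq:numeric_ce_eqn} is stated for compactly supported $\phi\in\cC^1$, whereas the monomials $t^a\bx^b$ are not compactly supported. This is harmless because every measure entering \eqref{eq:numeric_ce_eqn} has compact support. Indeed $\mu_0,\mu_T$ are carried by the closed disc $S$, while $\mu_S$ and $\mu_{\partial S}$ are carried by $\cX_S$ and $\cX_{\partial S}$, which are compact since $[0,T]$ and $S$ are compact and the velocity variable $\bv$ ranges over a bounded set on each of them ($\bv=(1,0)$ on $\cX_S$, and $\bv$ bounded on $\cX_{\partial S}$ as it is a selection of $(1,0)-\cN_S(\bx)$, bounded set enforced in the relaxation). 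Hence one may multiply $t^a\bx^b$ by a cutoff $\chi\in\cC^\infty_c$ that equals $1$ on a neighborhood of the union of these supports; the product is a legitimate compactly supported test function that agrees with the monomial, together with all its first derivatives, on the support of every measure appearing in \eqref{eq:numeric_ce_eqn}. The identity therefore extends verbatim to $\phi=t^a\bx^b$, and all the moments $m^S_{a-1,b}$, $m^{\partial S}_{a-1,b}$ and the velocity-weighted integrals are finite.

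It then remains to carry out the substitution. I would compute $\partial_t\phi(t,\bx)=a\,t^{a-1}\bx^b$ and note that the $i$-th component of $\nabla_\bx\phi(t,\bx)$ is $b_i\,t^a\bx^{b-e_i}$ (with the convention that this term is absent when $b_i=0$, so no negative exponent occurs), whence $\nabla_\bx\phi\cdot\bv=\sum_{i=1}^n b_i\,t^a\bx^{b-e_i}v_i$. The left-hand side of \eqref{eq:numeric_ce_eqn} becomes $T^a\int_{\cX_T}\bx^b\,\dd\mu_T-0^a\int_{\cX_0}\bx^b\,\dd\mu_0=m^T_{a,b}-m^0_{a,b}$ by definition of the boundary moments. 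On the right-hand side the two $\partial_t\phi$ terms give $a\,m^S_{a-1,b}+a\,m^{\partial S}_{a-1,b}$, while the two $\nabla_\bx\phi\cdot\bv$ terms give precisely $\sum_{i=1}^n\int_{\cX_{\partial S}}b_i t^a\bx^{b-e_i}v_i\,\dd\mu_{\partial S}$ and $\sum_{i=1}^n\int_{\cX_S}b_i t^a\bx^{b-e_i}v_i\,\dd\mu_S$. Collecting these equalities reproduces \eqref{eq:lmi}. As every step is just the identification of an integral against a monomial with its moment, there is no analytic obstacle; the only place requiring care, and the one I would emphasize in writing, is the cutoff argument ensuring that non-compactly-supported polynomial test functions are admissible and that the associated moments are well defined.
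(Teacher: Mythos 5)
Your proof is correct and follows essentially the same route as the paper's: substitute the monomial test function $\phi(t,\bx)=t^a\bx^b$ into the weak continuity equation \eqref{eq:numeric_ce_eqn}, compute $\partial_t\phi$ and $\nabla_\bx\phi\cdot\bv$, and identify each resulting integral with the corresponding moment. Your added cutoff argument justifying the use of non-compactly-supported polynomial test functions (valid since all measures involved have compact support) is a welcome extra precision but does not change the substance of the argument.
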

%\begin{proof}
%	The proof is a straightforward extension of \cite[Proposition 4.3]{SOUAIBY2023110836}
%\end{proof}
The moment-SOS hierarchy allows us to evaluate approximate moments $m_{a,b}:=m^S_{a,b}+m^{\partial S}_{a,b}$ and $m^T_{a,b}$ related to occupation measures $\mu$ and $\mu_T$. For details about the moment-SOS hierarchy we refer the readers to \cite[Section 4.3]{SOUAIBY2023110836} or \citep{henrion2020moment}.
The initial distribution is $\mu_0 = \delta_{(0,0.5)}$, i.e. the Dirac measure at coordinates $(0,0.5)$ whose moments $m^0_{a,b}$ are readily available. The simulation results are displayed in Figure \ref{fig:msos}, where we plot  the first order moments of terminal measures $\mu_T$, for different terminal times $T$ and computed for a given relaxation order using \texttt{GloptiPoly} and \texttt{SeDuMi}. We can observe the effect of the boundary on the measure even before it hits the boundary. We attribute this effect to the numerical inaccuracy due to finite order truncation. The approximate moments obtained, sometimes called {pseudo-moments}, may not represent the true moments but the accuracy increases as the relaxation order increases.
\begin{figure}[!tbh]
	\centering
	\includegraphics[width=0.45\textwidth]{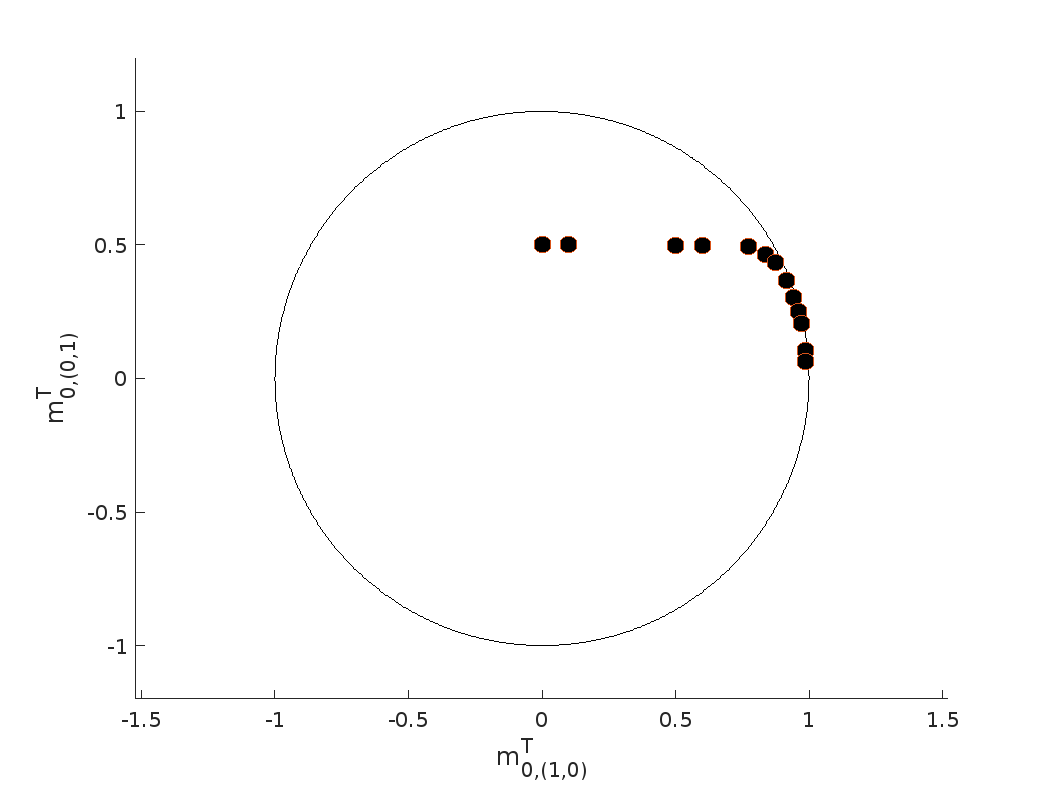}
	\caption{Moment-SOS hierarchy approximations of the first degree moments of the terminal measure $\mu_T$ solving the continuity equation \eqref{eq:numeric_le_eqn} for the nonsmooth system \eqref{eq:example2d}, with different terminal times ranging from $T=0$ (Dirac mass at $(0,0.5)$) to $T=3$ (Dirac mass approximately at $(1,0)$).
		\label{fig:msos}}
\end{figure}

%\vspace{-0.5cm}

\subsection{Bound on Wasserstein Distance}
In Section \ref{sec:fun_reg}, we used the Moreau-Yosida regularization to approximate the solution $(\mu_t)_{t \ge 0}\in\P(\R^n)$ of the continuity equation associated with the nonsmooth dynamical system \eqref{eq:nsd} with measures $( \mul )_{t\ge 0}\in\P(\R^n)$ which are solutions to the continuity equation with regularized vector field \eqref{eq:moreau_yosida}. We showed that the measures  $\mu_t , \mul$, which have representations of the form \eqref{eq:ce} and \eqref{eq:fun_reg_repForm} respectively,
satisfy a bound on their Wasserstein distance of the form $W_1(\mul,\mu_t)  < C_W \sqrt{\lambda}$ for each time $t\in [0,T]$ (the explicit expression of the constant $C_W$ is in \eqref{eq:W1_bound}).
Now let us validate this bound for the following example:
\begin{gather}\label{eq:oneD_ns}
\dot{x}(t) \in -1 - \cN_{\R^+}(x(t)). 
\end{gather}
Its Moreau-Yosida regularization is:
\begin{gather}\label{eq:oneD_reg}
\dot{x}^{\lambda}(t) = -1 -\frac{1}{\lambda}(x^\lambda(t)-\max(x^\lambda(t),0)).
\end{gather}
We consider an initial value problem for \eqref{eq:oneD_ns} and \eqref{eq:oneD_reg} with $x(0) = x^\lambda(0)= 0.5$. The solution to the initial value problem for \eqref{eq:oneD_ns} is 
\begin{gather}\label{eq:sol_ns_oneD}
x(t) = 
\begin{cases} 
0.5 - t & \text{for } 0 < t < \frac{1}{2} \\
0 & \text{for } t \geq \frac{1}{2}
\end{cases}
\end{gather}
and the solution for \eqref{eq:oneD_reg} is 
\begin{gather} \label{eq:sol_reg_oneD}
x^{\lambda}(t) = 
\begin{cases} 
0.5 - t & \text{for } 0 < t < \frac{1}{2} \\
\lambda (\exp(-(t-0.5)/\lambda)-1) & \text{for } t \geq \frac{1}{2}.
\end{cases}
\end{gather}

\begin{proposition}\label{prop:w1Bounds}
	Given $\mul,\mu_t \in \P(\R_+)$ and $\mu_0 = \mu^{\lambda}_0 = \delta_{x=a}$ for some $a>0$, then 
	\begin{gather}
	W_1(\mul,\mu_t) = |\xl(t)-x(t)|. 
	\end{gather}
\end{proposition}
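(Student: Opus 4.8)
The plan is to exploit the fact that the initial measure is a Dirac mass, so that both measures remain Dirac masses for all time, and then to invoke the elementary formula for the Wasserstein distance between two Dirac masses. First I would recall the representation formulas already established in the paper: by \eqref{eq:ce} the nonsmooth measure is $\mu_t = {X_t}_{\#}\mu_0$, with $X_t$ the flow map of \eqref{eq:nsd}, and by \eqref{eq:fun_reg_repForm} the regularized measure is $\mul = {X^\lambda_t}_{\#}\mu_0$, with $X^\lambda_t$ the flow map of \eqref{eq:moreau_yosida}. Uniqueness of these flow maps—so that the push-forwards are genuine single Dirac masses rather than superpositions over several trajectories—is guaranteed by Assumptions \ref{assume:1}--\ref{assume:2} for the nonsmooth system and by Lipschitz continuity of the regularized vector field.

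Next I would use that, by the definition of push-forward recalled in the preliminaries ($g_{\#}\mu_0(A) = \mu_0(\{x : g(x)\in A\})$), the push-forward of a Dirac mass satisfies $g_{\#}\delta_a = \delta_{g(a)}$. With $\mu_0 = \mu^\lambda_0 = \delta_{a}$ this yields $\mu_t = \delta_{x(t)}$ and $\mul = \delta_{\xl(t)}$, where $x(t) = X_t(a)$ and $\xl(t) = X^\lambda_t(a)$ are exactly the unique solutions given in closed form in \eqref{eq:sol_ns_oneD} and \eqref{eq:sol_reg_oneD}.

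Finally I would compute $W_1(\delta_{p},\delta_{q})$ for two points $p,q \in \R_+$ directly from the definition \eqref{eq:wass1d}. The coupling set $\Theta(\delta_p,\delta_q)$ contains a single element: any $\theta \in \Theta(\delta_p,\delta_q)$ has marginals concentrated on $\{p\}$ and $\{q\}$, which forces $\theta$ to be supported on $\{p\}\times\{q\}$, hence $\theta = \delta_{(p,q)} = \delta_p \otimes \delta_q$. Evaluating the transport cost then gives $W_1(\delta_p,\delta_q) = \int |x-y| \, \dd\delta_{(p,q)}(x,y) = |p-q|$. Applying this with $p = \xl(t)$ and $q = x(t)$ gives $W_1(\mul,\mu_t) = |\xl(t)-x(t)|$, as claimed.

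There is essentially no hard step here; the only point deserving care is the verification that the coupling set reduces to the single product measure, which pins the distance down exactly rather than merely bounding it. This exact identity is precisely what makes the example useful: combined with the closed forms \eqref{eq:sol_ns_oneD}--\eqref{eq:sol_reg_oneD}, it lets one evaluate $W_1(\mul,\mu_t)$ explicitly and compare it against the general $\sqrt{\lambda}$ upper bound of Theorem \ref{thm:fun_reg}.
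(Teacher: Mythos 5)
Your proposal is correct. The overall strategy coincides with the paper's: both proofs rest on the observation that a Dirac initial condition pushes forward to a Dirac mass under the (unique) flow maps, so that $\mu_t=\delta_{x(t)}$ and $\mul=\delta_{\xl(t)}$, and the problem reduces to computing $W_1$ between two point masses. Where you diverge is in that final elementary computation: the paper invokes the one-dimensional representation $W_1(\mu,\nu)=\int_{\R}\lvert F_\mu(s)-F_\nu(s)\rvert\,\dd s$ in terms of cumulative distribution functions and integrates the difference of two step functions, whereas you argue directly from the Kantorovich definition \eqref{eq:wass1d} that the coupling set $\Theta(\delta_p,\delta_q)$ is the singleton $\{\delta_p\otimes\delta_q\}$, giving $W_1(\delta_p,\delta_q)=\lvert p-q\rvert$. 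Your route is marginally cleaner and, unlike the CDF formula, is not tied to dimension one, so the identity would survive verbatim for analogous multidimensional examples; the paper's route has the small advantage of introducing the CDF formula that a reader might reuse for non-Dirac one-dimensional initial data. Either way the result and its role — providing an exact value of $W_1(\mul,\mu_t)$ to compare against the $O(\sqrt{\lambda})$ bound of Theorem \ref{thm:fun_reg} — are the same.
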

\begin{proof} It is known that in one dimension $W_1(\mu,\nu) = \int_{\R}|F_{\mu}(s)-F_{\nu}(s)| \dd s$ \citep{filipoOT} where $F_{\mu}(\cdot)$ and $F_{\nu}(\cdot)$ are the cumulative distribution functions of measures $\mu$ and $\nu$ respectively. Using this formula, we get
	\begin{gather}\label{eq:w1_oneD}
	W_1(\mul,\mu_t) = \int_{\R}|F_{\mul}(s)-F_{\mu_t}(s)| \dd s.
	\end{gather}
	Given $\mu_0 = \mu^{\lambda}_0 = \delta_{x=a}$, the measures $\mul$ and $\mu_t$ can be computed as  $\mul = \delta_{x^{\lambda}(t)}$ and $\mu_t = \delta_{x(t)}$ and thus $F_{\mul}(s)=\Theta_{\xl(t)}(s)$ and $F_{\mu_t}(s)=\Theta_{x(t)}(s)$. Here, $\Theta:\R \to \R$ is such that $\Theta_{x(t)}(s) = 1 ~{\rm for}~ x(t) \le s$ and $\Theta_{x(t)}(s) = 0 ~{\rm for}~ x(t) \ge s$.
	Substituting $F_{\mul}$ and $F_{\mu_t}$ in \eqref{eq:w1_oneD}, we get
	\begin{gather} \label{eq:w1_oneD2}
	W_1(\mul,\mu_t) =\int_{\R} |\Theta_{x^{\lambda}(t)}(s)-\Theta_{x(t)}(s)| \dd s.
	\end{gather}
	This quantity can be seen as the area under $|\Theta_{x^{\lambda}(t)}(s)-\Theta_{x(t)}(s)|$ at time $t$ and performing the integration \eqref{eq:w1_oneD2} we get the desired result.
\end{proof}

Given  \eqref{eq:sol_ns_oneD}, \eqref{eq:sol_reg_oneD} and using Proposition \ref{prop:w1Bounds} we obtain for $a=0.5$: 
\begin{gather}\label{eq:analyoneD}
W_1(\mul,\mu_t) = 
\begin{cases} 
0 & \text{for } 0 < t < \frac{1}{2} \\
|\lambda (e^{-(t-0.5)/\lambda}-1)| & \text{for } t \geq \frac{1}{2}.
\end{cases}
\end{gather}
Next we obtain the bound in \eqref{eq:W1_bound} for system \eqref{eq:oneD_ns} with $L_f =1 $, $L_s=0$ and $\mu_0 = \delta_{x=0.5}$. Substituting these values we get
\begin{gather}\label{eq:boundoneD}
W_1(\mul,\mu_t) \le \frac{3}{2}e^2 \sqrt{\frac{\lambda}{2}(e^{t}-1)}. 
\end{gather}
In Figure \ref{fig:fun_reg}, we show the plots of the analytical distance \eqref{eq:analyoneD}  and its upper bound \eqref{eq:boundoneD} for different values of $\lambda$ at four different time instants.

\begin{figure}[!tbh]
	\centering
	\includegraphics[scale=.54]{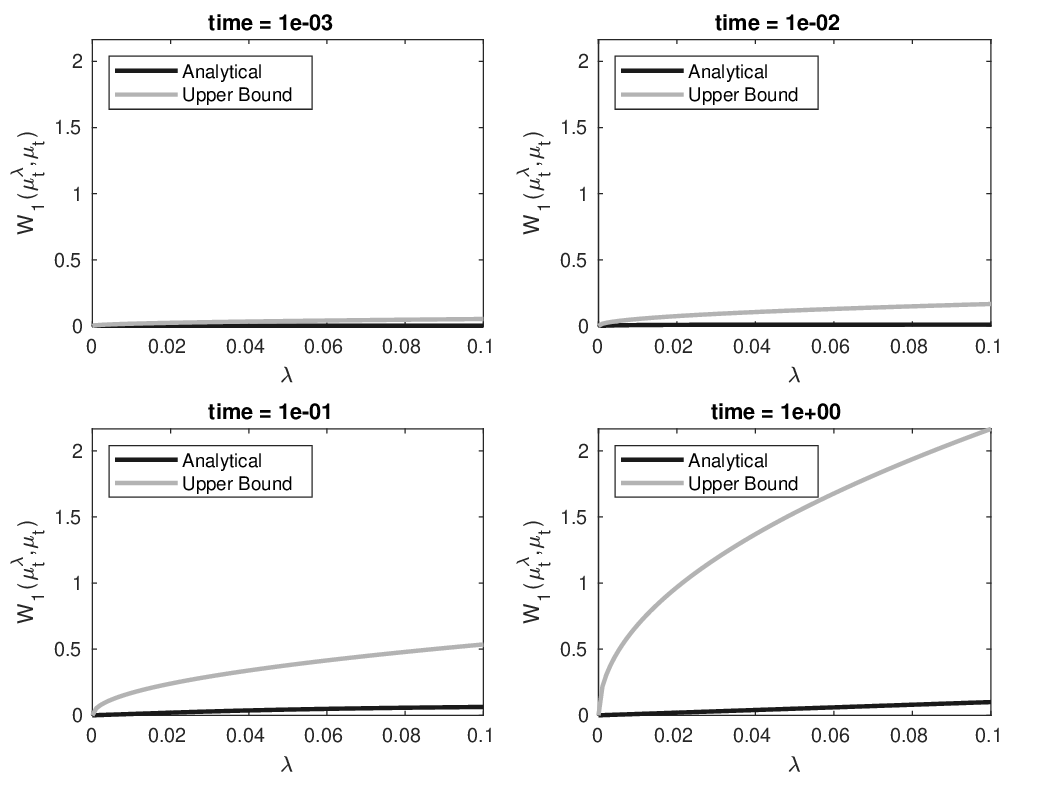}
	\centering \caption{$W_1$ distance \eqref{eq:analyoneD} (in black) and its upper bound \eqref{eq:boundoneD} (in gray) between the image measures  $\mu_t$ resp. $\mul$ through the flow \eqref{eq:ce} resp. the regularized flow \eqref{eq:fun_reg_repForm}, at various time instants. \label{fig:fun_reg}}
\end{figure}

In general, we can also look at the difference between the moments associated with the measures $\mu_t^\lambda$ and $\mu_t$.
\begin{proposition}
Consider the moment sequences $m_k(t)$ resp. $m^{\lambda}_k(t)$ for $\mu_t$ resp. $\mul$. Then, for a fixed $k\in \N$ and $t \in [0,T]$,
	\[	|m_k(t)-m^{\lambda}_k(t)| \le C_k W_1(\mu_t,\mul) \]
	where $C_k := k \max_{z\in \Omega} z^{k-1}$. 
\end{proposition}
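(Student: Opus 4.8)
The plan is to reduce the estimate to the Kantorovich--Rubinstein dual representation of the $W_1$ distance recalled in \eqref{eq:defDualW1}. The first step is to write the difference of moments as a single integral against the signed measure $\mu_t - \mul$:
\[
m_k(t) - m^{\lambda}_k(t) = \int_{\Omega} z^k \,\dd\mu_t(z) - \int_{\Omega} z^k \,\dd\mul(z) = \int_{\Omega} z^k \,\dd(\mu_t - \mul)(z).
\]
This identifies the quantity to be bounded as the pairing of the test function $z \mapsto z^k$ against $\mu_t - \mul$, which is precisely the object appearing under the supremum in \eqref{eq:defDualW1}.

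The second step is to estimate the Lipschitz modulus of the monomial $\phi_k(z) := z^k$ on the set $\Omega$. By the mean value theorem, for any $z_1, z_2 \in \Omega$ there is a point $\xi$ between them with $z_1^k - z_2^k = k \xi^{k-1}(z_1 - z_2)$, whence $|z_1^k - z_2^k| \le \big(k \max_{z \in \Omega} z^{k-1}\big)\,|z_1 - z_2|$, using that $\Omega \subset \R_+$ so that $z^{k-1} \ge 0$ there. Hence $\|\phi_k\|_{\rm Lip} \le C_k$, and $C_k$ is finite because $\Omega$ is compact and $k \ge 1$.

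The third step is to normalize and invoke duality. The rescaled test function $\phi_k / C_k$ has Lipschitz modulus at most one, so it is admissible in \eqref{eq:defDualW1}, which gives
\[
\left| \int_{\Omega} \frac{z^k}{C_k} \,\dd(\mu_t - \mul)(z) \right| \le W_1(\mu_t, \mul).
\]
Multiplying through by $C_k$ and combining with the first step yields the claimed bound $|m_k(t) - m^{\lambda}_k(t)| \le C_k \, W_1(\mu_t, \mul)$. No genuine obstacle arises in this argument; the only points requiring a little care are that $C_k$ be finite and that $\phi_k$ be globally (not merely locally) Lipschitz on $\Omega$, both of which follow from the standing compactness of $\Omega$ assumed in this part of the paper.
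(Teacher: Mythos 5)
Your argument is correct, but it takes the dual route where the paper takes the primal one. The paper's proof picks an optimal transport plan $\gamma\in\P(\Omega\times\Omega)$ with marginals $\mu_t$ and $\mul$, bounds $|m_k(t)-m_k^\lambda(t)|\le\int_{\Omega\times\Omega}|x^k-y^k|\,\dd\gamma(x,y)$, and then applies the mean value theorem to get $|x^k-y^k|\le C_k|x-y|$, so that the integral is exactly $C_k$ times the transport cost, i.e.\ $C_k W_1(\mu_t,\mul)$ by \eqref{eq:wass1d}. You instead write the moment gap as $\int_\Omega z^k\,\dd(\mu_t-\mul)$, establish the same Lipschitz estimate $\|\phi_k\|_{\rm Lip}\le C_k$ for $\phi_k(z)=z^k$ via the mean value theorem, and conclude through the Kantorovich--Rubinstein duality \eqref{eq:defDualW1} after normalizing by $C_k$. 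The two proofs share their only substantive ingredient --- the Lipschitz bound on the monomial over the compact set $\Omega$ --- and differ only in which characterization of $W_1$ is invoked; your version trades the (standard but unremarked) existence of an optimal coupling for an appeal to duality, and arguably makes the mechanism more transparent, since the bound is literally ``a Lipschitz test function paired against $\mu_t-\mul$.'' One caveat that applies equally to the paper's statement: for the constant $C_k=k\max_{z\in\Omega}z^{k-1}$ to serve as a Lipschitz modulus one really needs $k\max_{z\in\Omega}|z|^{k-1}$ unless $\Omega\subset\R_+$; you flag this explicitly, which is consistent with the one-dimensional example on $\R^+$ that this proposition accompanies.
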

\begin{proof}
	By the definition of the moments
	\begin{gather}\label{eq:moment_dist_1}
	|m_k(t)-m^{\lambda}_k(t)| \le \Big|\int_{\Omega} x^k  \dd\mu_t(x) - \int_{\Omega} y^k \dd\mu_t^{\lambda}(y) \Big|.
	\end{gather}
	Let us consider an optimal transport plan $\gamma \in \P(\Omega\times\Omega)$ for compact $\Omega\subset\R^n$ such that ${\pi_x}_{\#}\gamma =  \mu_t$ and ${\pi_y}_{\#}\gamma = \mul$. We use this transport plan  in \eqref{eq:moment_dist_1} to obtain
	\begin{multline}\label{eq:mom_wass}
	\Big|\int_{\Omega} x^k  \dd\mu_t(x) - \int_{\Omega} y^k  \dd\mu_t^{\lambda}(y) \Big| \\ \le \Big|\int_{\Omega} (x^k-y^k) \dd \gamma(x,y)\Big| \le \int_{\Omega} |x^k-y^k| \dd \gamma(x,y).
	\end{multline}
	Next, using the mean value theorem, we have $x^k-y^k = (x-y) k z^{k-1}$ for some $z\in [x,y]$. Using this formula in \eqref{eq:mom_wass}, we obtain
	\begin{multline*}
	|m_k(t)-m^{\lambda}_k(t)|  \le C_k\int_{\Omega} |x-y| \dd \gamma(x,y)  \\= C_k W_1(\mu_t,\mul) \sim O(\sqrt{\lambda})
	\end{multline*}
	where $C_k := k \max_{z\in \Omega} z^{k-1}$.
\end{proof}
So we expect that the distance of the moment sequence corresponding to the measures of the form (37) and (39) is $O(\sqrt\lambda)$. Using the moment-SOS hierarchy for such an approximation scheme as presented in  \cite{SOUAIBY2023110836}, we have a quantitative bound on the numerical error introduced by the functional regularization.

\subsection{Time-Discretized Measure Evolution }
In Section \ref{sec:td_pde}, we used geodesic interpolation and piecewise constant interpolation for the time discretized curves and showed convergence to the solution of the continuity equation \eqref{eq:ce_nsd}. Let us apply the time-stepping algorithm to implement a time-discretized evolution of the measure solutions to the continuity equation associated with the nonsmooth dynamical system \eqref{eq:nsd}.

Let $\tau$ be the time step between two discretized measures and let $\rhoe$ be the measure at time $k$. Then from Section \ref{sec:td_pde}:
\begin{gather}
\mu_0^{\tau} = \mu_0; ~~~~
\mu_{k+1}^{\tau} = {P_{S_{k+1}}}_{\#}  (\tau f_k(\cdot)+ id)_{\#} \mu_k.
\end{gather}
We model the measure as mass distributed on a space discretized grid. Then the time-stepping scheme consists of the following two steps:
\begin{itemize}
	\item Step 1: compute the pushforward measure
	\begin{equation}\label{eq:push_fwd}
	\tilde{\mu}_k \coloneqq 	(\tau f_k(\cdot)+ id)_{\#} \mu_k;
	\end{equation}
	\item Step 2: project each cell lying outside $S_{k+1}$ back onto $S_{k+1}$. This operation can be formulated as
	\begin{gather}\label{eq:proj}
	\mu_{k+1} = {P_{S_{k+1}}}_{\#}\tilde{\mu}_k
	\end{gather}
	and it can be understood as a projection of measures on the set of measures with support $S_{k+1}$.
\end{itemize}

In Figure \ref{fig:time_discrete}, we illustrate the scheme for the following example 
$$
\dot{x}(t) \in -\cN_{S(t)}(x(t))
$$
where $S(t)$ is a $4\times4$ grid square, with each cell assigned with mass of $1/16$ at time $t=0$.
\begin{figure} 
	%	\centering
	\raggedleft
	\includegraphics[scale=.5]{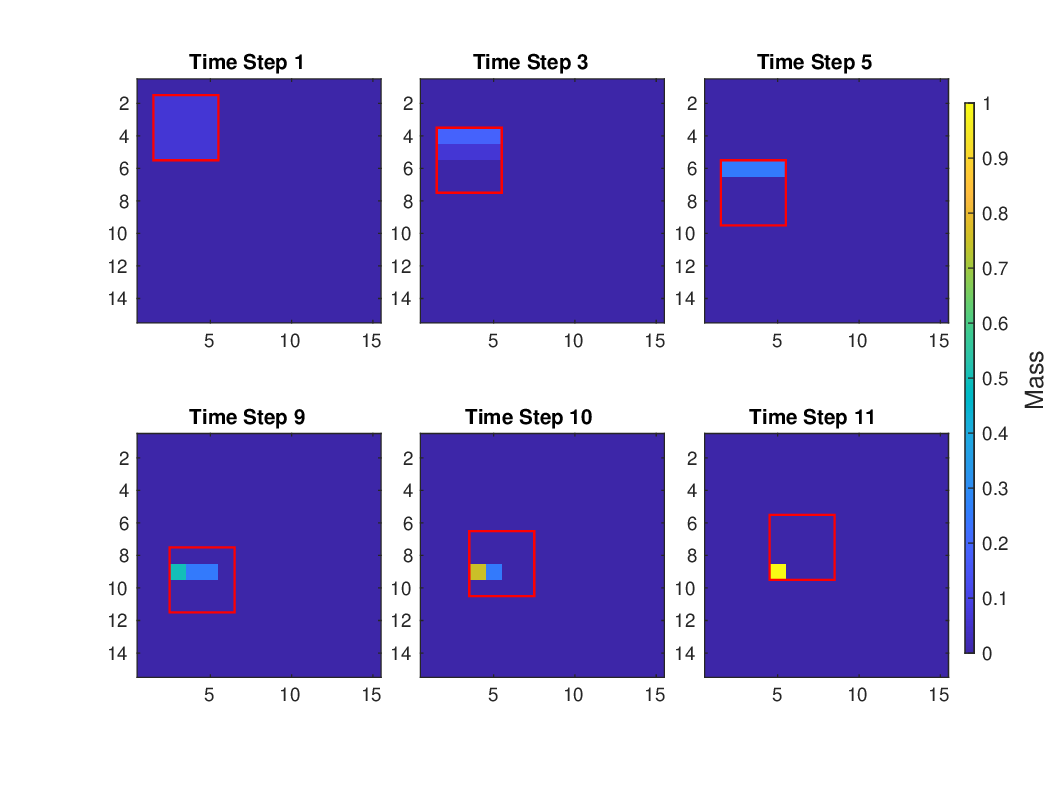}
	\centering \caption{Snapshots of time evolution of a uniform probability distribution on the moving square. The mass is initially distributed on a $4\times 4$ square grid with $1/16$ mass on each cell. For the first $7$ time steps the square moves downward and the mass can be seen to concentrate on the top edge of the square. Afterwards the square moves diagonally and the mass concentrates on the left edge until it completely concentrates at the bottom left corner at time step $12$.  \label{fig:time_discrete}}
\end{figure}

\begin{remark}
If we consider the numerical techniques for each formalism, we observe that each formalism handles the selection of velocity in a different way. This will affect the numerical accuracy and computational efficiency. For the first formalism, we observe that $\mu_S, \mu_{\partial S}$  in (81) are dependent of the velocity vector. Thus, while specifying the support of the measure, the normal cone has to be defined explicitly. Though we can directly write the continuity equation (without using approximation) with this approach, the measure depends on the velocity vector. The increase in number of variables (to be computed) reduces the computational efficiency.

When we approximate the solutions of the continuity equation corresponding to the regularized dynamics (38), the projection of the velocity vector onto the set must be defined. There is no need to select the velocity vector as the vector field is smoothened. However, numerical errors are introduced due to the approximation scheme used.

Similarly, the numerical approximation of the discrete-time problem is based on (95). One needs to define explicitly the projection map onto the set at each time instant. The measures at each time instant do not depend on an extra velocity vector, thus avoiding increasing the number of variables encountered with the first formalism.
\end{remark}

\section{Conclusion}
We addressed the problem of evolution of measures in a nonsmooth dynamical system modeled by evolution variation inequalities using three different formalisms. 
For the time-discretization formalism presented in Section 5, one can also draw some similarities with the literature on constrained sampling for stochastic differential equations, see for example \citep{Bubeck}. In these algorithms, a sequence of random variables is simulated in a recursive manner by projecting one-step of Euler-Maruyama interpolation onto a closed convex set, and under some regularity assumptions, it can be shown that the probability law associated with the limit of these random variables converges to the invariant distribution with respect to total variation. In our case, we restrict our analysis over finite time intervals and compare the time-discretized probability measure with the actual solution without assuming the existence of an invariant distribution.
In future work, we aim to study the optimal control problem for the considered system class using measure relaxation. Based on our preliminary investigation, the results of this article allow us to address the propagation of measures in the presence of control inputs. In particular, it opens up the possibility to study the convergence of discrete time optimal control problem to the continuous time optimal control problem in the space of measures.

\appendix

\section{Bounds used in proof of Proposition~\ref{prop:ce}}\label{appendix:1}
In this appendix, we provide estimates for the bounds $\int|\dot{\gamma}(t)|d\eta(x,\gamma)$. 
To do so, we first get bounds on $|\gamma(t)|$ using the linear growth in Assumption \ref{assume:4},
\begin{align*}\nonumber
|\gamma(t) - \gamma(0)| \le \int | \dot{\gamma}(s)| \dd s \le \int \beta(s)(1+|\gamma(s)|) \dd s \\ \nonumber
\le \int_0^t \beta(s)(1+|\gamma(0)|) \dd s + \int_0^t \beta(s) (|\gamma(s)-\gamma(0)|) \dd s.
\end{align*}
Applying Gronwall inequality by assuming first term is non-decreasing,
\begin{align}
|\gamma(t) - \gamma(0)| \le \Big[\int_0^t \beta(s) (1+|\gamma(0)|)\dd s\Big]~ e^{\Big(\int_0^t \beta(\sigma)\dd \sigma \Big)}.\label{eq:gronwall} 
\end{align}
Let $ \ol \beta_t:= \int_0^t \beta(\sigma) \dd \sigma$, then
\begin{align*}
|\gamma(t) - \gamma(0)| \le \ol\beta_t \exp(\ol\beta_t)(1+|\gamma(0)|) \le \ol\beta_T \exp(\ol\beta_T)(1+|\gamma(0)|).
\end{align*}
We use this last inequality to get the desired bound as follows:
\begin{align*}
& \int_{\R^n\times \Gamma_T^\omega} |\dot{\gamma}(t)| \dd\eta(x,\gamma) \le \int_{\R^n \times \Gamma_T^\omega} \beta(t)(1+|\gamma(t)|) \dd\eta(x,\gamma) \\
& \quad \le \int_{\R^n \times \Gamma_T^\omega} \beta(t)(1+|\gamma(0)|) (\ol\beta_T \exp(\ol\beta_T)+ 1) \dd \eta(x,\gamma) 
\le  K \beta(t),
\end{align*}
where $K := \int_{\R^n \times \Gamma_T^\omega} (1+|\gamma(0)|) (\ol\beta_T \exp(\ol\beta_T)+ 1) \dd \eta(x,\gamma)$.

\section{Differentiating first moment with time dependence}\label{appendix:2}
In this appendix, we derive the Liouville equation \eqref{eq:ce_eqn}. For any $\varphi \in \cC^1(\R,\R^n)$ we differentiate $\int \varphi(t,x) \dd \mu_t(x)$ and get,
\begin{align*}
&\frac{\dd}{\dd t} \int_{\R^n} \varphi(t,x) \dd \mu_t(x) =  \frac{\dd}{\dd t} \int_{\R^n} \varphi(t,x) \dd {\mathbf{e}_t}_{\#}\eta(x,\gamma) \\
& \qquad =\frac{\dd}{\dd t} \int_{\R^n\times \Gamma_T^\omega}  \varphi(t,\gamma(t)) \dd\eta(x,\gamma) \\
& \qquad = \int_{\R^n\times \Gamma_T^\omega} \Big(\partial_t \varphi(t,\gamma(t)) + \nabla_x \varphi(t,x) \dot{\gamma}(t) \Big) \dd\eta(x,\gamma) \\
& \qquad = \int_{\R^n} \Big(\partial_t \varphi(t,x) + \nabla_x \varphi(t,x)\cdot f(t,x) \Big) \dd \mu_t(x).
\end{align*}
Integrating w.r.t. $t$ on both sides gives the desired result.

\section{A property for Moreau-Yosida regularization for the mapping induced by normal cone}\label{appendix:3}
In \citep{SOUAIBY2023110836}, the bound on $|Y_{\lambda}|$ was established as follows,
\begin{gather}\label{apndx:ceq1}
|Y_{\lambda}(\xl(t))| \le \frac{1}{\lambda}\int_0^t e^{-(t-s)/\lambda} (L_f +L_f(|\xl(t)|) + L_s) \dd s.
\end{gather}
It was demonstrated that $\xl$ satisfy uniform bound $|\xl(t)| \le e^{2L_f T}|\xl(0)|+\kappa$ where, $\kappa = (e^{2L_f T}-1) \frac{2 L_f +L_s}{2 L_f}$ and $T$ is the time interval for the existence of trajectory.
Upon substituting this uniform bound on $|\xl|$ in \eqref{apndx:ceq1} we get,
\begin{gather}
|Y_{\lambda}(\xl(t))| \le L_f (1+\kappa +e^{2L_f T}|\xl(0)|) +L_s.
\end{gather}

\bibliography{ref.bib}

\begin{thebibliography}{32}
\expandafter\ifx\csname natexlab\endcsname\relax\def\natexlab#1{#1}\fi
\providecommand{\url}[1]{\texttt{#1}}
\providecommand{\href}[2]{#2}
\providecommand{\path}[1]{#1}
\providecommand{\DOIprefix}{doi:}
\providecommand{\ArXivprefix}{arXiv:}
\providecommand{\URLprefix}{URL: }
\providecommand{\Pubmedprefix}{pmid:}
\providecommand{\doi}[1]{\href{http://dx.doi.org/#1}{\path{#1}}}
\providecommand{\Pubmed}[1]{\href{pmid:#1}{\path{#1}}}
\providecommand{\bibinfo}[2]{#2}
\ifx\xfnm\relax \def\xfnm[#1]{\unskip,\space#1}\fi
%Type = Book
\bibitem[{Adly(2018)}]{Adly2018}
\bibinfo{author}{Adly, S.}, \bibinfo{year}{2018}.
\newblock \bibinfo{title}{A variational approach to nonsmooth dynamics}.
\newblock \bibinfo{publisher}{Springer Briefs in Mathematics. Springer}.
%Type = Incollection
\bibitem[{Ambrosio(2008)}]{Amb08}
\bibinfo{author}{Ambrosio, L.}, \bibinfo{year}{2008}.
\newblock \bibinfo{title}{Transport equation and {Cauchy} problem for
  non-smooth vector fields}, in: \bibinfo{editor}{Dacorogna, B.},
  \bibinfo{editor}{Marcellini, P.} (Eds.), \bibinfo{booktitle}{Calculus of
  Variations and Nonlinear Partial Differential Equations}.
  \bibinfo{publisher}{Springer Berlin Heidelberg}, pp. \bibinfo{pages}{1--41}.
%Type = Book
\bibitem[{Ambrosio et~al.(2005)Ambrosio, Gigli and Savar\'e}]{AmbrGigl05}
\bibinfo{author}{Ambrosio, L.}, \bibinfo{author}{Gigli, N.},
  \bibinfo{author}{Savar\'e, G.}, \bibinfo{year}{2005}.
\newblock \bibinfo{title}{Gradient Flows: {In} Metric Spaces and in the Space
  of Probability Measures}.
\newblock \bibinfo{publisher}{Birkh\"auser}, \bibinfo{address}{Basel}.
%Type = Book
\bibitem[{Aubin and Cellina(1984)}]{aubin_di}
\bibinfo{author}{Aubin, J.P.}, \bibinfo{author}{Cellina, A.},
  \bibinfo{year}{1984}.
\newblock \bibinfo{title}{Differential Inclusions: Set-Valued Maps and
  Viability Theory}.
\newblock \bibinfo{publisher}{Springer-Verlag}, \bibinfo{address}{Berlin,
  Heidelberg}.
%Type = Book
\bibitem[{Aubin and Frankowska(1990)}]{aubin1990}
\bibinfo{author}{Aubin, J.P.}, \bibinfo{author}{Frankowska, H.},
  \bibinfo{year}{1990}.
\newblock \bibinfo{title}{Set-Valued Analysis}.
\newblock \bibinfo{publisher}{Birkh\"{a}user}, \bibinfo{address}{Berlin}.
%Type = Book
\bibitem[{Bensoussan and Lions(1978)}]{BensLion78}
\bibinfo{author}{Bensoussan, A.}, \bibinfo{author}{Lions, J.},
  \bibinfo{year}{1978}.
\newblock \bibinfo{title}{Applications des in\'egalit\'es variationnelles en
  contr\^ole stochastique}.
\newblock \bibinfo{publisher}{Dunod}, \bibinfo{address}{Paris}.
%Type = Article
\bibitem[{Bernardin(2003)}]{Bern03}
\bibinfo{author}{Bernardin, F.}, \bibinfo{year}{2003}.
\newblock \bibinfo{title}{Multivalued stochastic differential equations:
  {C}onvergence of a numerical scheme}.
\newblock \bibinfo{journal}{Set-Valued Analysis} \bibinfo{volume}{11},
  \bibinfo{pages}{393--415}.
%Type = Article
\bibitem[{Bernicot and Venel(2011)}]{bernicot2011}
\bibinfo{author}{Bernicot, F.}, \bibinfo{author}{Venel, J.},
  \bibinfo{year}{2011}.
\newblock \bibinfo{title}{Stochastic perturbation of sweeping process and a
  convergence result for an associated numerical scheme}.
\newblock \bibinfo{journal}{Journal of Differential Equations}
  \bibinfo{volume}{251}, \bibinfo{pages}{1195--1224}.
%Type = Book
\bibitem[{Bogachev(2007)}]{Bogachev_2007}
\bibinfo{author}{Bogachev, V.I.}, \bibinfo{year}{2007}.
\newblock \bibinfo{title}{Measure Theory}.
\newblock \bibinfo{publisher}{Springer Berlin Heidelberg}.
%Type = Article
\bibitem[{Bouchut and James(1998)}]{BoucJame98}
\bibinfo{author}{Bouchut, F.}, \bibinfo{author}{James, F.},
  \bibinfo{year}{1998}.
\newblock \bibinfo{title}{One dimensional transport equation with discontinuous
  coefficients}.
\newblock \bibinfo{journal}{Nonlinear Analysis} \bibinfo{volume}{32},
  \bibinfo{pages}{891--933}.
%Type = Article
\bibitem[{Bouchut et~al.(2005)Bouchut, James and Mancini}]{BoucJameManc05}
\bibinfo{author}{Bouchut, F.}, \bibinfo{author}{James, F.},
  \bibinfo{author}{Mancini, S.}, \bibinfo{year}{2005}.
\newblock \bibinfo{title}{Uniqueness and weak stability for multi-dimensional
  transport equations with one-sided {L}ipschitz coefficients}.
\newblock \bibinfo{journal}{Annali della Scuola Normale Superiore di Pisa -
  Classe di Scienze, S\'erie 5} \bibinfo{volume}{4}, \bibinfo{pages}{1--25}.
%Type = Article
\bibitem[{Brogliato and Tanwani(2020)}]{BroTan20}
\bibinfo{author}{Brogliato, B.}, \bibinfo{author}{Tanwani, A.},
  \bibinfo{year}{2020}.
\newblock \bibinfo{title}{Dynamical systems coupled with monotone set-valued
  operators: {F}ormalisms, applications, well-posedness, and stability}.
\newblock \bibinfo{journal}{SIAM Review} \bibinfo{volume}{62},
  \bibinfo{pages}{3--129}.
%Type = Article
\bibitem[{Bubeck et~al.(2018)Bubeck, Eldan and Lehec}]{Bubeck}
\bibinfo{author}{Bubeck, S.}, \bibinfo{author}{Eldan, R.},
  \bibinfo{author}{Lehec, J.}, \bibinfo{year}{2018}.
\newblock \bibinfo{title}{Sampling from a log-concave distribution with
  projected langevin monte carlo}.
\newblock \bibinfo{journal}{Discrete and Computational Geometry}
  \bibinfo{volume}{59}, \bibinfo{pages}{757--783}.
%Type = Article
\bibitem[{Camlibel and Tanwani(2021)}]{aneel}
\bibinfo{author}{Camlibel, K.}, \bibinfo{author}{Tanwani, A.},
  \bibinfo{year}{2021}.
\newblock \bibinfo{title}{A discretization algorithm for time-varying composite
  gradient flow dynamics}.
\newblock \bibinfo{journal}{IFAC-PapersOnLine} \bibinfo{volume}{54},
  \bibinfo{pages}{558--563}.
\newblock \bibinfo{note}{24th International Symposium on Mathematical Theory of
  Networks and Systems MTNS 2020}.
%Type = Article
\bibitem[{Castaing et~al.(2014)Castaing, Monteiro-Marques and
  de~Fitte}]{castaing2014}
\bibinfo{author}{Castaing, C.}, \bibinfo{author}{Monteiro-Marques, M.D.P.},
  \bibinfo{author}{de~Fitte, P.R.}, \bibinfo{year}{2014}.
\newblock \bibinfo{title}{Some problems in optimal control governed by the
  sweeping process}.
\newblock \bibinfo{journal}{Journal of Nonlinear and Convex Analysis}
  \bibinfo{volume}{15}, \bibinfo{pages}{1043--1070}.
%Type = Article
\bibitem[{Castaing et~al.(2016)Castaing, Monteiro-Marques and
  de~Fitte}]{castaing2016}
\bibinfo{author}{Castaing, C.}, \bibinfo{author}{Monteiro-Marques, M.D.P.},
  \bibinfo{author}{de~Fitte, P.R.}, \bibinfo{year}{2016}.
\newblock \bibinfo{title}{A {S}korokhod problem governed by a closed convex
  moving set}.
\newblock \bibinfo{journal}{Journal of Convex Analysis} \bibinfo{volume}{23},
  \bibinfo{pages}{387--423}.
%Type = Inproceedings
\bibitem[{Cavagnari et~al.(2018)Cavagnari, Marigonda and Piccoli}]{spDI}
\bibinfo{author}{Cavagnari, G.}, \bibinfo{author}{Marigonda, A.},
  \bibinfo{author}{Piccoli, B.}, \bibinfo{year}{2018}.
\newblock \bibinfo{title}{Superposition principle for differential inclusions},
  in: \bibinfo{editor}{Lirkov, I.}, \bibinfo{editor}{Margenov, S.} (Eds.),
  \bibinfo{booktitle}{Large-Scale Scientific Computing},
  \bibinfo{publisher}{Springer International Publishing},
  \bibinfo{address}{Cham}. pp. \bibinfo{pages}{201--209}.
%Type = Inproceedings
\bibitem[{C\'epa(1995)}]{Cepa95}
\bibinfo{author}{C\'epa, E.}, \bibinfo{year}{1995}.
\newblock \bibinfo{title}{\'{E}quations diff\'erentielles stochastiques
  multivoques}, in: \bibinfo{booktitle}{S\'eminaire de Probabalit\'es}, pp.
  \bibinfo{pages}{86--107}.
%Type = Article
\bibitem[{{Di Marino} et~al.(2016){Di Marino}, Maury and
  Santambrogio}]{marino2016}
\bibinfo{author}{{Di Marino}, S.}, \bibinfo{author}{Maury, B.},
  \bibinfo{author}{Santambrogio, F.}, \bibinfo{year}{2016}.
\newblock \bibinfo{title}{Measure sweeping processes}.
\newblock \bibinfo{journal}{Journal of Convex Analysis} \bibinfo{volume}{23},
  \bibinfo{pages}{567--601}.
%Type = Article
\bibitem[{{DiPerna} and Lions(1989)}]{DiPeLion89}
\bibinfo{author}{{DiPerna}, R.J.}, \bibinfo{author}{Lions, P.},
  \bibinfo{year}{1989}.
\newblock \bibinfo{title}{Ordinary differential equations, transport theory and
  {Sobolev} spaces}.
\newblock \bibinfo{journal}{Invent. Math.} \bibinfo{volume}{98},
  \bibinfo{pages}{511--547}.
%Type = Article
\bibitem[{Edmond and Thibault(2006)}]{EdmoThib06}
\bibinfo{author}{Edmond, J.F.}, \bibinfo{author}{Thibault, L.},
  \bibinfo{year}{2006}.
\newblock \bibinfo{title}{{BV} solutions of nonconvex sweeping process
  differential inclusion with perturbation}.
\newblock \bibinfo{journal}{Journal of Differential Equations}
  \bibinfo{volume}{226}, \bibinfo{pages}{135--179}.
%Type = Book
\bibitem[{Henrion et~al.(2020)Henrion, Korda and Lasserre}]{henrion2020moment}
\bibinfo{author}{Henrion, D.}, \bibinfo{author}{Korda, M.},
  \bibinfo{author}{Lasserre, J.}, \bibinfo{year}{2020}.
\newblock \bibinfo{title}{The Moment-{SOS} Hierarchy}.
  volume~\bibinfo{volume}{4} of \textit{\bibinfo{series}{Series on Optimization
  and its Applications}}.
\newblock \bibinfo{publisher}{World Scientific Publishing Europe}.
%Type = Article
\bibitem[{Kantorovich(2006)}]{ksOT}
\bibinfo{author}{Kantorovich, L.}, \bibinfo{year}{2006}.
\newblock \bibinfo{title}{On the translocation of masses}.
\newblock \bibinfo{journal}{Journal of Mathematical Sciences}
  \bibinfo{volume}{133}, \bibinfo{pages}{1381--1382}.
%Type = Article
\bibitem[{McCann(1997)}]{McCann1997ACP}
\bibinfo{author}{McCann, R.J.}, \bibinfo{year}{1997}.
\newblock \bibinfo{title}{A convexity principle for interacting gases}.
\newblock \bibinfo{journal}{Advances in Mathematics} \bibinfo{volume}{128},
  \bibinfo{pages}{153--179}.
%Type = Book
\bibitem[{Monge(1781)}]{monge}
\bibinfo{author}{Monge, G.}, \bibinfo{year}{1781}.
\newblock \bibinfo{title}{M{\'e}moire sur la th{\'e}orie des d{\'e}blais et des
  remblais}.
\newblock \bibinfo{publisher}{De l'Imprimerie Royale},
  \bibinfo{address}{Paris}.
%Type = Book
\bibitem[{Mordukhovich(2006)}]{Mord06}
\bibinfo{author}{Mordukhovich, B.}, \bibinfo{year}{2006}.
\newblock \bibinfo{title}{Variational Analysis and Generalized Differentiation.
  Part~I: Basic Theory}.
\newblock \bibinfo{publisher}{Springer}, \bibinfo{address}{Berlin}.
%Type = Article
\bibitem[{Moreau(1977)}]{moreau1977}
\bibinfo{author}{Moreau, J.J.}, \bibinfo{year}{1977}.
\newblock \bibinfo{title}{Evolution problem associated with a moving convex set
  in a {H}ilbert space}.
\newblock \bibinfo{journal}{Journal of Differential Equations}
  \bibinfo{volume}{26}, \bibinfo{pages}{347--374}.
\newblock \bibinfo{note}{Preliminary version in: Probl\`eme d'\'evolution
  associ\'e \`a un convexe mobile d'un espace {H}ilbertien. Comptes Rendus de
  l'Acad\'emie des Sciences. S\'erie IIb, M\'ecanique, Elsevier, 276: 791-794,
  1973}.
%Type = Book
\bibitem[{Rockafellar and Wets(1998)}]{RockWets98}
\bibinfo{author}{Rockafellar, R.T.}, \bibinfo{author}{Wets, R.J.B.},
  \bibinfo{year}{1998}.
\newblock \bibinfo{title}{Variational Analysis}.
\newblock \bibinfo{publisher}{Springer Verlag}, \bibinfo{address}{Berlin
  Heidelberg}.
%Type = Book
\bibitem[{Santambrogio(2015)}]{filipoOT}
\bibinfo{author}{Santambrogio, F.}, \bibinfo{year}{2015}.
\newblock \bibinfo{title}{Optimal Transport for Applied Mathematicians.
  Calculus of Variations, {PDEs} and Modeling}.
\newblock \bibinfo{publisher}{Birkh\"auser Cham}.
%Type = Article
\bibitem[{Souaiby et~al.(2023)Souaiby, Tanwani and Henrion}]{SOUAIBY2023110836}
\bibinfo{author}{Souaiby, M.}, \bibinfo{author}{Tanwani, A.},
  \bibinfo{author}{Henrion, D.}, \bibinfo{year}{2023}.
\newblock \bibinfo{title}{Ensemble approximations for constrained dynamical
  systems using {Liouville} equation}.
\newblock \bibinfo{journal}{Automatica} \bibinfo{volume}{149},
  \bibinfo{pages}{110836}.
%Type = Article
\bibitem[{Stewart(1990)}]{stewart_LP}
\bibinfo{author}{Stewart, D.E.}, \bibinfo{year}{1990}.
\newblock \bibinfo{title}{A high accuracy method for solving odes with
  discontinuous right-hand side}.
\newblock \bibinfo{journal}{Numerische Mathematik} \bibinfo{volume}{58},
  \bibinfo{pages}{299--328}.
%Type = Book
\bibitem[{Villani(2003)}]{Vil03}
\bibinfo{author}{Villani, C.}, \bibinfo{year}{2003}.
\newblock \bibinfo{title}{Topics in optimal transportation}.
\newblock \bibinfo{publisher}{American Mathematical Society, Providence}.

\end{thebibliography}

\end{document}